\pgfplotsset{compat=1.18}
\pgfplotsset{every tick label/.append style={font=\footnotesize}}
\newcolumntype{R}{>{\raggedleft\arraybackslash}X}
\newcolumntype{L}{>{\raggedright\arraybackslash}X}
\newcolumntype{C}{>{\centering\arraybackslash}X}
\newcolumntype{A}{>{\columncolor{gray!25}}C}
\newcolumntype{a}{>{\columncolor{gray!25}}c}
\newlength{\tablen}
\newcolumntype{.}{D{.}{.}{-1}}
\renewcommand\p@subfigure{\arabic{figure}.}
\renewcommand\p@subtable{\arabic{table}.}
\def\addlegendimage{\csname pgfplots@addlegendimage\endcsname}
\setlist[itemize]{leftmargin=2.5\parindent}
\setlist[enumerate]{leftmargin=2.5\parindent}
\theoremstyle{plain}
\newtheorem{proposition}{Proposition}[section]
\theoremstyle{definition}
\newtheorem{example}{Example}
\theoremstyle{remark}
\def\keywords{\vspace{.5em} 
{\noindent \textit{Keywords}: }}
\def\AMS{\vspace{.5em} 
{\noindent \textbf{\emph{MSC} class}: }}
\def\JEL{\vspace{.5em} 
{\noindent \textbf{\emph{JEL} classification number}: }}
\title{Tournament schedules and incentives in a double round-robin tournament with four teams}
\author{
\href{https://sites.google.com/view/laszlocsato}{L\'aszl\'o Csat\'o}\thanks{~Corresponding author. Email: \emph{laszlo.csato@sztaki.hu} \newline
HUN-REN Institute for Computer Science and Control (HUN-REN SZTAKI), Laboratory on Engineering and Management Intelligence, Research Group of Operations Research and Decision Systems, Budapest, Hungary \newline
Corvinus University of Budapest (BCE), Institute of Operations and Decision Sciences, Department of Operations Research and Actuarial Sciences, Budapest, Hungary} \qquad \qquad
\href{https://math.bme.hu/~molontay/}{Roland Molontay}\thanks{~Email: \emph{molontay@math.bme.hu} \newline 
Budapest University of Technology and Economics, Department of Stochastics, Institute of Mathematics, M\H{u}egyetem rkp.~3., H-1111 Budapest, Hungary \newline
ELKH-BME Stochastics Research Group, M\H{u}egyetem rkp.~3., H-1111 Budapest, Hungary} \qquad \qquad
J\'ozsef Pint\'er\thanks{~Email: \emph{pinterj@edu.bme.hu} \newline 
Budapest University of Technology and Economics, Department of Stochastics, Institute of Mathematics, M\H{u}egyetem rkp.~3., H-1111 Budapest, Hungary \newline
ELKH-BME Stochastics Research Group, M\H{u}egyetem rkp.~3., H-1111 Budapest, Hungary}}
\date{\today}
\def\Dedication{
{\noindent
``\emph{Teams and leagues want to optimize their investments by playing a good schedule which seeks to meet various criteria. Good fixtures are important in order to maximize revenues, ensure the attractiveness of the games, and to keep the interest of both the media and the fans.}'' \citep[p.~1]{KendallKnustRibeiroUrrutia2010}
}}
\begin{document}

\newgeometry{top=25mm,bottom=25mm,left=25mm,right=25mm}
\maketitle
\thispagestyle{empty}
\Dedication

\begin{abstract}
\noindent
In a round-robin tournament, a team may lack the incentive to win if its final rank does not depend on the outcome of the matches still to be played. This paper introduces a classification scheme to determine these weakly (where one team is indifferent) or strongly (where both teams are indifferent) stakeless matches in a double round-robin contest with four teams. The probability that such matches arise can serve as a novel fairness criterion to compare and evaluate match schedules.
Our approach is illustrated by the UEFA Champions League group stage. A simulation model is built to compare the 12 valid schedules for the group matches. Some schedules are shown to be dominated by other schedules. It is found that the strongest team should play at home in the last round against one of the middle teams, depending on the preferences of the tournament organiser. Choosing an optimal sequence of matches with respect to the proposed metric can help to avoid uninteresting matches.
\end{abstract}

\keywords{OR in sports; round-robin tournament, simulation; sports scheduling; stakeless games; tournament design}

\AMS{62F07, 90-10, 90B35, 90B90}

\JEL{C44, C63, Z20}

\clearpage
\restoregeometry

\section{Introduction} \label{Sec1}

There is powerful support for the proposition that the level and structure of incentives influence the performance of the agents in sports \citep{EhrenbergBognanno1990}. Consequently, one of the most important responsibilities of sports governing bodies around the world is to set the right incentives for the contestants \citep{Szymanski2003}.

Knockout tournaments are guaranteed to be exciting until the end since the only way to win is not to lose. On the other hand, round-robin tournaments are sometimes decided before the final round(s), resulting in uninteresting games that should be avoided as much as possible because:
\begin{itemize}
\item
A team still in contention to win the championship positively affects attendance \citep{PawlowskiNalbantis2015};
\item
If at least one team is completely indifferent between winning, drawing, or even losing, then it may field second-team players and take other factors such as resting before the next match into account, which is unfair to the teams that have already played against this indifferent team \citep{ChaterArrondelGayantLaslier2021}.
\end{itemize}
The matches played in a round-robin tournament can be classified into three categories: competitive (neither team is indifferent), weakly stakeless (one team is indifferent), and strongly stakeless (both teams are indifferent).

To highlight the importance of this division, let us see two illustrations from the most prestigious European club competition in association football (henceforth football).

\begin{table}[ht!]
\begin{threeparttable}
\centering
\caption{Ranking in Group G of the 2018/19 UEFA Champions League after Matchday 5}
\label{Table1}
\rowcolors{3}{}{gray!20}
    \begin{tabularx}{\linewidth}{Cl CCC CCC >{\bfseries}C} \toprule \hiderowcolors
    Pos   & Team  & W     & D     & L     & GF    & GA    & GD    & Pts \\ \bottomrule \showrowcolors
    1     & Real Madrid CF & 4     & 0     & 1     & 12    & 2     & $+10$   & 12 \\
    2     & AS Roma & 3     & 0     & 2     & 10    & 6     & $+4$    & 9 \\
    3     & FC Viktoria Plze{\v n} & 1     & 1     & 3     & 5     & 15    & $-10$   & 4 \\
    4     & PFC CSKA Moskva & 1     & 1     & 3     & 5     & 9     & $-4$    & 4 \\ \bottomrule    
    \end{tabularx}
    
    \begin{tablenotes} \footnotesize
\item
Pos = Position; W = Won; D = Drawn; L = Lost; GF = Goals for; GA = Goals against; GD = Goal difference; Pts = Points. All teams have played four matches. 
    \end{tablenotes}
\end{threeparttable}
\end{table}

\begin{example} \label{Examp1}
\citep{Gieling2022}
Table~\ref{Table1} presents the standing of Group G in the 2018/19 season of the UEFA Champions League---a double round-robin tournament with four teams---before the last round. Real Madrid and Roma were guaranteed to be the group winner and the runner-up, respectively.

With the first place in the group already secured, Real Madrid had the luxury of fielding a fully rotated squad against CSKA Moskva. Perhaps not coincidentally, empty seats were everywhere in the Santiago Bernab\'eu Stadium when the final whistle came since Real Madrid lost a European home tie by more than two goals first in its history \citep{Bell2018}.
However, even the remarkable performance of CSKA Moskva was insufficient to finish third in the group because Roma also suffered a shocking defeat in Plze{\v n}.
\end{example}

\begin{table}[ht!]
\begin{threeparttable}
\centering
\caption{Ranking in Group C of the 2021/22 UEFA Champions League after Matchday 5}
\label{Table2}
\rowcolors{3}{}{gray!20}
    \begin{tabularx}{\linewidth}{Cl CCC CCC >{\bfseries}C} \toprule \hiderowcolors
    Pos   & Team  & W     & D     & L     & GF    & GA    & GD    & Pts \\ \bottomrule \showrowcolors
    1     & AFC Ajax & 5     & 0     & 0     & 16    & 3     & $+13$   & 15 \\
    2     & Sporting Clube de Portugal & 3     & 0     & 2     & 12    & 8     & $+4$    & 9 \\
    3     & Borussia Dortmund & 2     & 0     & 3     & 5     & 11    & $-6$    & 6 \\
    4     & Be{\c s}ikta{\c s} JK & 0     & 0     & 5     & 3     & 14    & $-11$   & 0 \\ \bottomrule    
    \end{tabularx}
    
    \begin{tablenotes} \footnotesize
\item
Pos = Position; W = Won; D = Drawn; L = Lost; GF = Goals for; GA = Goals against; GD = Goal difference; Pts = Points. All teams have played five matches.
    \end{tablenotes}
\end{threeparttable}
\end{table}

\begin{example} \label{Examp2}
Table~\ref{Table2} presents the standing of Group C in the 2021/22 Champions League with one round still to be played. If two or more teams were equal on points on completion of the group matches, their ranking was determined by higher number of points obtained in the matches played among the teams in question, followed by superior goal difference from the group matches played among the teams in question \citep[Article~17.01]{UEFA2021c}.
Since the result of Borussia Dortmund vs.\ Sporting CP (Sporting CP vs.\ Borussia Dortmund) was 1-0 (3-1), and Sporting CP had an advantage of three points over Borussia Dortmund, Sporting CP was guaranteed to be the runner-up. Furthermore, Ajax won the group and Be{\c s}ikta{\c s} was the fourth-placed team. Consequently, the outcomes of the two strongly stakeless games played in the last round did not influence the group ranking.
\end{example}

Even though there are a plethora of similar examples, the literature on scheduling round-robin tournaments traditionally does not consider the issue of incentives among the main criteria of fairness \citep{GoossensYiVanBulck2020}. The current paper aims to fill this research gap by investigating the influence of the schedule on the competitiveness of the matches played in the last round(s) of a double round-robin tournament with four teams.

Furthermore, even if stakeless games do not lead to unfair outcomes, there are convincing arguments to avoid them from a business perspective. Stakeless matches may have substantially less merit in terms of entertainment value, attendance, and TV-viewership \citep{DiMattiaKrumer2023, PawlowskiNalbantis2015}, which is certainly unfavourable for the organiser.


Our main contributions can be summarised as follows:
\begin{itemize}
\item
We introduce a novel fairness measure to compare and evaluate round-robin tournament schedules, namely, the probability that (weakly and strongly) stakeless matches arise in the final round(s);
\item
We show how (weakly and strongly) stakeless matches can be determined in a general double round-robin tournament with four contestants;
\item
We present that the UEFA Champions League group stage can be reliably simulated by identifying the teams with the pots from which they are drawn, guaranteeing the existence of a universal optimal schedule across all groups, independently of the outcome of the group draw;
\item
We found that competitiveness is maximised if the strongest team plays at home in the last round against one of the middle teams.
\end{itemize}
The paper conveys a clear message for tournament organisers: choosing the best sequence of games with respect to the proposed metric can help to avoid uninteresting matches. Even though fixing the schedule may affect the qualifying probabilities of the teams to some extent, this could be a reasonable price, especially because UEFA has recently made reforms in the seeding pots \citep{CoronaForrestTenaWiper2019, DagaevRudyak2019} and the qualification system \citep{Csato2022b} that have severely disadvantaged some clubs.

The remainder of the paper is structured as follows.
Section~\ref{Sec2} gives a concise overview of the literature. The game classification scheme is presented and applied to a double round-robin contest with four teams in Section~\ref{Sec3}. The simulation model, the scheduling options, and the results are detailed in Section~\ref{Sec4}. Finally, Section~\ref{Sec5} concludes.

\section{Related literature} \label{Sec2}

Although the topic can be connected to many research fields, we focus on three areas in the following: fairness, match importance, and scheduling.

\subsection*{Fairness in sports}

Stakeless games seem to be unfavourable from a sporting perspective. A team whose position in the Champions League group is already fixed may field weaker players and take into account other factors such as resting before the next match in its domestic championship. These games offer an opportunity to collude as demonstrated in sumo wrestling \citep{DugganLevitt2002} and football \citep{ElaadKrumerKantor2018}.

Suspicion of collusion can badly harm a tournament and even the reputation of the sports industry, independently of whether the match is actually fixed or not. Nonetheless, the history of football is full of examples of tacit coordination. The most famous match is probably the ``disgrace of Gij\'on'', where both West Germany and Austria were satisfied by the result of 1-0 that let the opposing teams advance to the second phase of the 1982 FIFA World Cup \citep[Section~3.9.1]{KendallLenten2017}. In order to prevent similar scandals, since then almost all tournaments---including the UEFA Champions League---are designed such that the games of the last round are played simultaneously. However, this rule has not fully guaranteed fairness as we have seen.

Some recent papers have attempted to determine the best schedule for the FIFA World Cup groups. \citet{Stronka2020} investigates the temptation to lose, resulting from the desire to play against a weaker opponent in the first round of the knockout stage. This danger is found to be the lowest if the strongest and the weakest competitors meet in the last (third) round.
Inspired by the format of the 2026 FIFA World Cup, \citet{Guyon2020a} quantifies the risk of collusion in groups of three teams, where the two teams playing the last game know exactly what results let them advance. The author identifies the match sequence that minimises the risk of collusion.
\citet{ChaterArrondelGayantLaslier2021} develop a general method to evaluate the probability of any situation in which the two opposing teams might not play competitively, and apply it to the current format of the FIFA World Cup (a single round-robin tournament with four teams). The scheduling of matches, in particular, the choice of teams playing each other in the last round, turns out to be a crucial factor for obtaining exciting and fair games.

The current work joins this line of research by emphasising that a good schedule is able to reduce the threat of tacit collusion.
However, the competition examined here---double round-robin with four teams---is much more complicated than a single round-robin with three \citep{Guyon2020a, ChaterArrondelGayantLaslier2021}, four \citep{Stronka2020, ChaterArrondelGayantLaslier2021}, or five \citep{ChaterArrondelGayantLaslier2021} teams. The proposed match classification scheme is also different from previous suggestions: for instance, \citet{ChaterArrondelGayantLaslier2021} do not distinguish between weakly and strongly stakeless games, which seem to be important categories in scheduling the UEFA Champions League groups according to our findings.

\subsection*{Match importance}

Measuring the importance of a match can be used not only to compare tournament designs but also for selecting games to broadcast, assigning referees, or explaining attendance \citep{GoossensBelienSpieksma2012}.
The first metric has probably been suggested in \citet{Jennett1984}, where the importance of a game is the inverse of the number of remaining games that need to be won in order to obtain the title. Furthermore, the measure equals zero if a team can no longer be the final winner. On the other hand, a match can be deemed important if either of the opponents can still win the league (or relegate) when all other teams will draw in the rest of their games \citep{AudasDobsonGoddard2002}.

Perhaps the most widely used quantification of match importance has been provided by \citet{Schilling1994}: the significance of an upcoming match for a particular club is determined by the difference in the probability of obtaining a prize if the fixture were won rather than lost. In other words, match importance reflects the strength of the relationship between the match result and a given season outcome. This approach is usually applied with a Monte Carlo simulation of the remaining games to estimate the final standing in the ranking \citep{BuraimoForrestMcHaleTena2022, Lahvicka2015, ScarfShi2008}.

There are further concepts of match importance.
\citet{GoossensBelienSpieksma2012} evaluate several formats for the Belgian football league with respect to the number of unimportant games that do not affect the final outcome (league title, relegation, qualification for European cups) at the moment they are played.
\citet{Geenens2014} identifies two drawbacks of this definition as it does not take into account the temporal position of the game in the tournament and the strength of the two playing teams. Therefore, the author suggests an entropy-related measure of decisiveness in terms of the uncertainty about the eventual winner prevailing in the tournament at the time of the game.
Inspired by this idea, \citet{CoronaHorrilloWiper2017} analyse how the identification of decisive matches depends on the statistical approach used to estimate the results of the matches.

\citet{FaellaSauro2021} call a match irrelevant if it does not influence the ultimate ranking of the teams involved and prove that a tournament always contains an irrelevant match if the schedule is predetermined and there are at least five contestants. This notion is somewhat akin to our classes of matches presented in Section~\ref{Sec1}.
Finally, \citet{GollerHeininger2023} propose an event importance measure: the difference between the contest reward probability distributions induced by the possible outcomes of a single event. It encompasses more complex or dynamic tournament designs and reward structures, which are common in the society. The authors show the association of the quantified importance of a match to in-match behaviour and the performance of the teams in seven major European football leagues.

Our main contribution to this research area resides in thoroughly analysing the role of schedule for match importance. Previous studies have only considered this as a promising direction for future study, which can be illustrated by two quotations:
``\emph{research could be done to investigate the influence of the schedule on match importance}'' \citep[p.~239]{GoossensBelienSpieksma2012}, and ``\emph{The tournament design, in particular the order the games were played, obviously influences the decisiveness of those games, so we have scrupulously followed that schedule in our study}'' \citep[p.~160]{Geenens2014}.
Furthermore, the above metrics of match importance are not able to account for the lack of incentives. For example, the entropy-based decisiveness measure of \citet{Geenens2014} contains the strength of the two playing teams---but Real Madrid fielded a fully rotated squad for the match discussed in Example~\ref{Examp1}.

\subsection*{Sports scheduling}

The Operational Research (OR) community devotes increasing attention to the design of sports tournaments \citep{Csato2021a, KendallLenten2017, LentenKendall2022, Wright2014}. One of the main challenges is choosing a schedule that is fair for all contestants both before and after the matches are played \citep{GoossensYiVanBulck2020}. The traditional issues of fairness in scheduling are the number of breaks (two consecutive home or away games), the carry-over effect (which is related to the previous game of the opponent), and the number of rest days between consecutive games. They are discussed in several survey articles \citep{GoossensSpieksma2012b, KendallKnustRibeiroUrrutia2010, RasmussenTrick2008, Ribeiro2012, RibeiroUrrutiadeWerra2023, VanBulckGoossensSchonbergerGuajardo2020}.

The referred studies usually consider the teams as nodes in graphs. However, they are strategic actors and should allocate their limited effort throughout the contest, or even across several contests. Researchers have recently begun to take similar considerations into account. \citet{KrumerMegidishSela2017a} investigate round-robin tournaments with a single prize and either three or four symmetric players. In the subgame perfect equilibrium of the contest with three players, the probability of winning is maximised for the player who competes in the first and the last rounds. This result holds independent of whether the asymmetry is weak or strong. However, the probability of winning is the highest for the player who competes in the second and third rounds if there are two prizes \citep{KrumerMegidishSela2020}. In the subgame perfect equilibrium of the contest with four players, the probability of winning is maximised for the player who competes in the first game of both rounds. These theoretical findings are reinforced by an empirical analysis, which includes the FIFA World Cups and the UEFA European Championships, as well as two Olympic wrestling events \citep{KrumerLechner2017}.

Two recent works focus on an issue that is similar to our topic.
\citet{Yi2020} develops an implementor-adversary approach to construct a robust schedule that maximizes suspense, the round when the winner of a round-robin tournament is decided. \citet{Gieling2022} aims to find the schedule having this particular property. While this binary metric of tension is clearly important for most round-robin tournaments, it does not always reflect the stakes appropriately. For instance, teams are mainly interested in obtaining the first two positions in the FIFA World Cup group stage \citep{ChaterArrondelGayantLaslier2021}.

To summarise, we add a novel fairness criterion, the probability of stakeless games, to compare and assess potential schedules for a round-robin tournament. Since the previous section has demonstrated why league organisers, teams, and fans would not like to see stakeless matches in the majority of sports, this measure can be used to evaluate any real-world sports timetable, for instance, the recently proposed and used schedule of FIFA World Cup South American Qualifiers \citep{DuranGuajardoSaure2017}.

\section{Game classification in a double round-robin tournament with four teams} \label{Sec3}

This section introduces the principle behind our classification scheme and shows how stakeless matches can be detected in any double round-robin contest with four contestants, a format that is widely used in sports competitions.

\subsection{The underlying idea} \label{Sec31}

Let us assume that each team strives to achieve its target or improve its ranking in a round-robin tournament. The target can be anything depending on the final ranking such as being the winner, being at least the runner-up, or avoiding relegation. However, it is reasonable in most leagues that the target is to achieve a higher rank since, for instance, a better position results in higher revenue \citep{BergantinosMoreno-Ternero2020a}.
Then three categories of matches can be distinguished:
\begin{itemize}[label=$\bullet$]
\item
\emph{Competitive game}:
Neither team is indifferent because they have still not achieved their targets or they can improve their ranking through a more favourable result on the field.
\item
\emph{Weakly stakeless game}:
One of the teams is completely indifferent as it has achieved its target or it cannot improve its position in the final ranking, independently of the outcomes of the matches still to be played. However, it has a positive probability that the other team fails to achieve its target or obtains a higher rank through a more favourable result on the field.
\item
\emph{Strongly stakeless game}:
Both teams are completely indifferent since their targets or positions in the final ranking are not influenced by the outcome of this particular match.
\end{itemize}

The above classification differs from the definitions given in the existing literature. For example, \citet{ChaterArrondelGayantLaslier2021} call a match stakeless if at least one team becomes indifferent between winning, drawing, or even losing by 5 goals difference with respect to qualification. But this notion does not consider the incentives of the opponent, which is an important factor for the competitiveness of the game, too.

\subsection{Determining stakeless games in a double round-robin tournament with four teams} \label{Sec33}

It is far from trivial to establish when a team is guaranteed to win a round-robin tournament \citep{CechlarovaPotpinkovaSchlotter2016, GusfieldMartel2002, KernPaulusma2004}. In practical applications, usually an integer programming model is developed to check whether a team is still competing for some outcome or not \citep{GoossensBelienSpieksma2012, GotzesHoppmann2022, RaackRaymondSchlechteWerner2014, RussellvanBeek2012}. However, in the relatively simple case of a double round-robin tournament with four teams, which consists of six matchdays, stakeless games can be identified in a straightforward way as follows.

Assume that a team can play its second match against another team only after it has played once against all other teams. This is a usual property of schedules in European football competitions \citep{GoossensSpieksma2012b}. Furthermore, a match is allowed to have three possible outcomes: win (3 points), draw (1 point), loss (0 points). The tie-breaking rule is either goal difference or head-to-head results \citep{Berker2014, Csato2023a}. Finally, the two matches in any round are played simultaneously, that is, exactly the results of the matches played in all the previous rounds are known.

\begin{proposition} \label{Prop1}
The final position of a team in the group ranking cannot be determined if it has played at most three matches.
\end{proposition}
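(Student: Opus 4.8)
The plan is to show that, for every team $T$ and every history of results, the remaining games admit two completions that place $T$ in different final positions. First I would record the combinatorial structure: a double round-robin with four teams has $2\binom{4}{2}=12$ games played over six rounds of two simultaneous matches each, so after round $k$ every team has played exactly $k$ games. Thus ``having played at most three matches'' means we are at the end of round $1$, $2$, or $3$, and by the stated scheduling assumption (a second meeting occurs only after all first meetings) the first single round-robin is complete after round~$3$: team $T$ then has a return leg against each of the other three teams left to play, together with the three return legs among those three rivals. I would reduce to the boundary case of exactly three matches played, since any earlier state can be extended to some round-$3$ state, and indeterminacy of a reachable future state forces indeterminacy now.

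For the boundary case I would compare two completions of the six remaining games. In the favourable completion $T$ wins all three of its return legs; in the unfavourable one it loses all three; in both, the three return legs among the rivals are drawn. Writing $p_X$ for the points of team $X$ after round~$3$, the final totals are $p_T+9$ versus $p_X+2$ in the favourable case and $p_T$ versus $p_X+5$ in the unfavourable case, so the number of teams finishing above $T$ can only be larger in the unfavourable completion. A short count shows these two numbers coincide only in one regime, namely when every rival trails $T$ by at least five points after three rounds (which forces $p_T\ge 5$); otherwise some rival's total lies in the band straddled by the two completions, its order relative to $T$ flips, and the two positions of $T$ differ, as required.

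The main obstacle is exactly this degenerate regime, in which $T$ is already dominant and tops both extreme completions. I would dispatch it with a dedicated third completion: because the three rivals share at least six points from their mutual first-leg games, the strongest rival holds at least two points, and by letting it win all three of its remaining games it reaches at least $11$ points, overtaking $T$'s frozen total of at most $9$; hence $T$ is no longer first, giving a second achievable position. Throughout, the tie-breaking rule (goal difference or head-to-head) is never an obstacle: since we are free to design the completions down to the scorelines, we may choose goal margins so that every comparison used above is strict or breaks in the direction we need. The only genuinely delicate point is thus bounding how far $T$ can separate from the field after three rounds, which the six-point floor on the rivals' mutual games settles cleanly.
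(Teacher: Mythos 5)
Your proof is correct, but it takes a genuinely more constructive route than the paper's. The paper's proof is a two-line pairwise argument: at most nine points can be collected in three matches, so any two teams are separated by at most nine points; since nine points remain available and the two teams still have a mutual match to play (so goal difference or head-to-head can be steered by choosing scorelines), the trailing team can always finish above the leader, and hence no team's position is settled. Strictly speaking, that argument leaves implicit the passage from ``the pairwise order of $A$ and $B$ can flip'' to ``$A$'s \emph{position}, a count of teams above $A$, changes'', since in an arbitrary completion another team's movement could compensate. Your proof closes exactly this gap: freezing the rivals' mutual return legs as draws yields two canonical completions under which the set of teams finishing above $T$ is monotone, so any rival in the band $p_T-5\le p_X\le p_T+7$ strictly changes the count; and you dispose of the residual dominant-$T$ regime with a third completion, using the nice observation that the three mutual first-leg games distribute at least six points among the rivals, so the strongest holds at least two and can reach $2+9=11>9\ge p_T$, with no tie-break issues. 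One compressed step deserves spelling out in your ``short count'': in the configuration $p_T\le 1$ with one rival on $9$ points, the witness to ``not every rival trails by five'' is itself \emph{above} the band, so the flip must come from the other rivals---but these are forced into the band, because $8$ points is unattainable from three matches and two rivals cannot both have $9$, so your conclusion survives. (Your band's lower edge is conservative---with tie-breaks exploited a rival trailing by exactly five can also flip---but this only enlarges your degenerate regime, which the third completion covers anyway.) In short, the paper's route buys brevity; yours buys a complete position-counting argument, at the cost of a case analysis.
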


\begin{proof}
In three matches, at most nine points can be collected. Therefore, at least two teams $A$ and $B$ exist such that the difference between their number of points is at most nine, hence, team $B$($A$) can be ranked higher than team $A$($B$) at the end of the tournament even if team $A$($B$) has nine points after three matches. This does not depend on the tie-breaking rule because both the goal difference and the head-to-head results can be better for team $B$($A$) than for team $A$($B$) since at least one match remains to be played between them.
\end{proof}

According to Proposition~\ref{Prop1}, the position of a team in the group ranking can be known after Matchday~4 at the earliest.

\begin{proposition} \label{Prop2}
If the tie-breaking rule is goal difference, a team is guaranteed to win the tournament after Matchday~4 if and only if it has at least seven points more than the runner-up.
If the tie-breaking rule is head-to-head records, a team is guaranteed to win the tournament after Matchday~4 if and only if it has at least seven points more than the runner-up or the following conditions hold:
\begin{itemize}
\item
it leads by six points over the runner-up; and
\item
it leads by at least seven points over the third-placed team; and
\item
it has played two matches against the runner-up.
\end{itemize}
\end{proposition}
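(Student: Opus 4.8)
The plan is to exploit the schedule assumption, which forces rounds~1--3 to form the first single round-robin and round~4 to open the second; hence after Matchday~4 every team has met all three opponents at least once, has exactly two matches left, and can gain at most six further points. I would first dispose of the goal-difference case. For sufficiency, if the leader $X$ is seven points clear of the runner-up $Y$ (and hence of all three rivals), then the most any rival can reach, $p_Y+6$, still falls short of $p_X$, so $X$ wins outright. For necessity I would argue the contrapositive: if the gap is at most six, I let $Y$ win both of its remaining games while $X$ loses both of its own. These outcomes never conflict, because $X$ and $Y$ either have no common game left or their single remaining meeting is the one $Y$ wins. Then $Y$ reaches at least $p_X$, and since goals are unbounded I can inflate $Y$'s margins and deflate $X$'s so that $Y$ finishes ahead on goal difference, so $X$ is not guaranteed the title.

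For the head-to-head rule the delicate direction is sufficiency at the boundary gap of exactly six. The key observation is that if $Y$ is genuinely the runner-up, $p_X-p_Y=6$, and both $X$--$Y$ games have already been played, then $X$ must dominate $Y$ on their mutual record. I would prove this by writing $p_X-p_Y=(h_X-h_Y)+(a-b)$, where $h_X,h_Y$ are the head-to-head points and $a,b\le 6$ are the points $X$ and $Y$ took from their single games against the other two teams $Z$ and $W$. Enumerating the two mutual results gives $h_X-h_Y\in\{-6,-3,0,3,6\}$; since $a-b\le 6$, the equation forces $h_X-h_Y>0$ unless $h_X-h_Y=0$ with $a=6,\,b=0$. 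But $a=6,\,b=0$ means $X$ beat both $Z$ and $W$ while $Y$ lost to both, so $Z$ and $W$ each take three points off $Y$ and at least one of them already outscores $Y$, contradicting $Y$ being the runner-up. Hence $h_X>h_Y$. Together with the hypothesis that $X$ leads the third-placed team by at least seven—so only $Y$ can ever reach $p_X$ and any final tie is a two-way $X$--$Y$ tie—this head-to-head dominance secures the title; the gap-$\ge 7$ clause is handled exactly as in the goal-difference case.

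For head-to-head necessity I would check that when none of the listed conditions holds a spoiling scenario exists. If the gap is at most five, $Y$ overtakes $X$ on points as before. If the gap is exactly six but the two $X$--$Y$ games are not both played, a rematch remains; letting $Y$ win it and its other game (with $X$ losing both) ties the points, and whatever the fixed first result was, $Y$ wins the tie-break—on head-to-head points, or, after a split, on head-to-head goal difference by taking a large enough winning margin in the rematch. Finally, if the gap is six and both $X$--$Y$ games are played but the lead over the third team is only six, then that team $Z$ satisfies $p_Z=p_Y=p_X-6$; here I use that $X$ cannot have completed both meetings with two distinct rivals (only four games, and it must have met $W$), so the $X$--$Z$ rematch is still pending, and the same construction lets $Z$ win its remaining games against $X$ and $Y$ to climb level with $X$ and take their head-to-head.

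The main obstacle is this crux lemma and, more broadly, keeping precise track of which pairs still have a game outstanding, since that is exactly what decides whether a trailing team can both reach $X$ on points and beat it on the head-to-head tie-break. The bookkeeping fact that $X$ cannot have played both games against two different opponents, and the unboundedness of goals that makes every goal-difference comparison freely arrangeable, are the two structural observations that make all the case distinctions close cleanly.
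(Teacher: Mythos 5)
Your proof is correct, and its crux is essentially the same counting argument as the paper's, though packaged differently and carried further on the necessity side. The paper proves the boundary case by contradiction, splitting on the leader's point total: if $p_A=9$, the head-to-head must be a $(3,3)$ split, so the runner-up's three points all come from beating the leader, it lost to both other teams, and one of those two---thanks to their two mutual games, which distribute at least four points between them---reaches at least five points, contradicting runner-up status; if $p_A\ge 10$, the leader has at least three wins and at most one draw, hence strictly better head-to-head points, contradicting the assumption. Your identity $p_X-p_Y=(h_X-h_Y)+(a-b)$ with $h_X-h_Y\in\{-6,-3,0,3,6\}$ is a tidier reorganisation of exactly this case split: the branch $h_X-h_Y=0$, $a=6$, $b=0$ is the paper's $p_A=9$ case, and it uniformly absorbs the two-draw case $h_X=h_Y=2$ (i.e.\ $p_A=8$), which the paper's dichotomy ``nine points / at least ten points'' silently skips; eliminating that case outright is a small robustness gain, since two draws tie the head-to-head completely and would otherwise push the comparison to criteria beyond the head-to-head record.

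Two remarks. First, your step ``at least one of them already outscores $Y$'' needs one more line in the $(3,3)$ subcase, where $p_Y=3$ and $p_Z=p_W=3$ is conceivable a priori: you must invoke the two $Z$--$W$ games (played in rounds 1 and 4, since round 4 is the $X$--$Y$ rematch) distributing at least four points, so $\max(p_Z,p_W)\ge 5>3$. This is precisely the paper's ``two matches played against the fourth team'' device; you gesture at the bookkeeping only in your closing paragraph, but it belongs inside the lemma. Second, you are more complete than the paper on the ``only if'' direction: the paper dispatches goal-difference necessity in one sentence and never explicitly constructs the head-to-head spoilers, whereas your three scenarios---gap at most five; gap six with the rematch pending and the margin trick on head-to-head goal difference; gap six with both mutual games done but the third team within six, exploiting the necessarily pending $X$--$Z$ rematch---are exactly what is needed, and all respect the fixed remaining fixtures (just take care in the last scenario that $X$ \emph{loses}, rather than draws, its game against the fourth team, so that $X$ stays on $p_X$ and the tie with $Z$ actually materialises).
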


\begin{proof}
After Matchday~4, each team has two remaining matches, where at most six points can be scored. Therefore, if ties are broken by goal difference, a team can still be ranked higher than another team at the end of the tournament if it has six points less but not if it has scored seven points less.

On the other hand, the six points advantage of team $A$ over the runner-up $B$ is sufficient to win the tournament if the head-to-head records of team $B$ against team $A$ cannot be better, which is guaranteed if teams $A$ and $B$ have already played two matches against each other. To prove this claim, assume for contradiction that team $A$ has an advantage of at most six points after Matchday~4 over the runner-up team $B$ with worse head-to-head results than team $B$. If team $A$ has scored only nine points, then team $B$ has scored three points against team $A$, thus, there exists a third team $C$ with at least five points due to its win against team $B$ and its two matches played against the fourth team $D$. Then team $B$ cannot be the runner-up. Consequently, team $A$ has scored at least ten points in the first four rounds and has played at most one draw besides at least three wins. Therefore, team $A$ has scored more points against team $B$ than vice versa, which is impossible.
\end{proof}

\begin{proposition} \label{Prop3}
If the tie-breaking rule is goal difference, a team will certainly be fourth in the final ranking after Matchday 4 if and only if it has at least seven points less than the third-placed team.
If the tie-breaking rule is head-to-head records, a team will certainly be fourth in the final ranking after Matchday~4 if and only if it has at least seven points less than the third-placed team or the following conditions hold:
\begin{itemize}
\item
it has six points less than the third-placed team; and
\item
it has at least seven points less than the runner-up; and
\item
it has played two matches against the third-placed team.
\end{itemize}
\end{proposition}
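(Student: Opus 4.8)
The plan is to mirror the proof of Proposition~\ref{Prop2}, since being guaranteed the last place is the order-reversed analogue of being guaranteed to win. As there, the single quantitative fact driving everything is that after Matchday~4 each team has exactly two games left, hence can gain at most six points, while points already earned never decrease. Throughout let $D$ be the team in question and let $C$, $B$, $A$ denote the teams currently in third, second, and first position, respectively.

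For the goal-difference rule I would argue both directions directly. \emph{Sufficiency}: if $D$ trails $C$ by at least seven points then, because $A$ and $B$ stand above $C$, their current totals also exceed $D$'s by at least seven; as $D$ adds at most six and the others never drop points, all three finish strictly above $D$, so $D$ is certainly fourth. \emph{Necessity}: if $D$ trails $C$ by at most six, I exhibit a continuation escaping fourth place. Let $D$ win both remaining games and $C$ lose both; consulting the double round-robin matching structure shows these four results never conflict, whether or not $D$ still has to meet $C$, so $D$ reaches at least $C$'s current total, and lopsided scorelines then push $D$'s goal difference past $C$'s. This settles the goal-difference case.

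For the head-to-head rule the seven-point gap is handled exactly as above, so the work concentrates on the boundary gap of six points, which is where the genuine difficulty lies. The decisive structural remark is that once $D$ has played $C$ twice their head-to-head record is \emph{frozen} and cannot be repaired in the remaining rounds. For \emph{sufficiency} under the three listed conditions, the only way $D$ could reach $C$'s total is to win both of its games while $C$ loses both, so I must show that in the resulting tie $C$ wins the head-to-head. I would do this by contradiction: supposing $D$'s head-to-head against $C$ is not worse forces $C$ to have beaten both $A$ and $B$, $D$ to have lost to both, and the two $C$--$D$ games to be level, so $C$'s total is at least eight. But then $A$ and $B$ each earn only three points outside their two mutual games (a win over $D$, a loss to $C$), capping their combined total at twelve, whereas each must match $C$'s total of at least eight---a combined total of at least sixteen, which is impossible. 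The clause ``at least seven points behind the runner-up'' enters only to keep $A$ and $B$ out of $D$'s reach, so that overtaking $C$ is $D$'s sole escape route.

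For \emph{necessity} in the head-to-head case I would verify that if the three conditions fail then $D$ can still dodge fourth place. A deficit of at most five to $C$ is immediate on points. For a deficit of exactly six, the failure of the conjunction leaves two situations. If $D$ and $C$ have not yet met twice, I let $D$ win their outstanding meeting by a wide margin, alongside its other remaining game, while $C$ loses out, so $D$ either wins the now-unfrozen head-to-head or ties it and prevails on goal difference. Otherwise $D$ and $C$ have already met twice, whence the failure forces $D$ to trail the runner-up by only six points; as $D$'s remaining opponents are then exactly the top two, it meets the runner-up once more, and the same construction lets $D$ overtake the runner-up instead. The main obstacle throughout is this head-to-head boundary analysis: keeping straight who has played whom under the double round-robin matchings and confirming that the constructed scorelines are simultaneously realisable. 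Everything else reduces to the ``six points in two games'' bound already exploited in Propositions~\ref{Prop1} and~\ref{Prop2}.
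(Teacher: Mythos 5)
Your proof is correct and follows exactly the route the paper intends: its published proof of Proposition~\ref{Prop3} is the single line that the argument is analogous to Proposition~\ref{Prop2}, and your proposal is precisely that order-reversed analogue carried out in full, including the key points of the frozen head-to-head record once the two $C$--$D$ games are played, the forced configuration in which $C$ sweeps $A$ and $B$ while $D$ is swept, and the resulting counting contradiction ($A$ and $B$ together collect at most $3+3+6=12$ points, yet each would need at least the eight points of $C$). The only difference is that you supply details the paper leaves implicit---the explicit necessity constructions with consistent remaining-round pairings, and a combined-points count where the proof of Proposition~\ref{Prop2} instead splits cases on the leader's score---and these are all sound.
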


\begin{proof}
The proof is analogous to Proposition~\ref{Prop2}.
\end{proof}

Note that the conditions in Propositions~\ref{Prop2} and \ref{Prop3} are more complicated than the ones appearing in previous studies \citep{ChaterArrondelGayantLaslier2021, Guyon2020a} because of two reasons:
(a) a double round-robin contest contains more matches than a single round-robin contest; and
(b) the tie-breaking rule of head-to-head results increases the difficulty of the analysis compared to goal difference.

\begin{proposition} \label{Prop4}
The second- and third-placed clubs cannot be determined after Matchday 4, independently of the tie-breaking rule.
\end{proposition}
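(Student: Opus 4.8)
The plan is to prove the sharper and more transparent statement that after Matchday~4 no single team can be guaranteed to finish as the runner-up, and — by the symmetric argument obtained by interchanging the roles of ``above'' and ``below'' — that no single team can be guaranteed to finish third. Since determining the second- and third-placed clubs would require exactly such guarantees, the proposition follows from these two claims. Throughout I would rely on the same arithmetic used in Propositions~\ref{Prop1}--\ref{Prop3}: after Matchday~4 every team has exactly two matches left and can gain at most six points, so a six-point lead never guarantees a higher final position (the trailing team can win both of its remaining games, take the direct encounter if one remains, and be favoured by the tie-break), whereas a seven-point lead can never be overturned. Hence team $X$ is certain to finish above team $Y$ only if $X$ leads by at least seven points, the sole exceptions being the special head-to-head configurations of Propositions~\ref{Prop2} and \ref{Prop3}, which require the two teams to have already met twice and which concern only the first and the last positions.

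First I would fix notation adapted to the fixtures. Because each team meets every opponent once before any return game, the four games remaining after Matchday~4 are precisely the return fixtures of the two pairs that did not meet on Matchday~4; writing the four teams as two pairs, every team's two remaining opponents are the two teams of the other pair, while its partner is not played again. Now suppose, for contradiction, that a team $B$ with $b$ points is guaranteed to finish second. Then in every continuation exactly one team finishes above $B$ and exactly two finish below it, and the heart of the argument is to show that the identities of these teams are forced to coincide across all continuations: the team above $B$ must permanently lead by at least seven points, and each of the two teams below $B$ must permanently trail by at least seven points.

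To establish this I would use two extreme continuations. In the continuation where $B$ loses both its games while each of its two opponents also wins its other game, both opponents gain six points; if neither trailed $B$ by at least seven, both would overtake $B$ and push it to third, a contradiction — so at least one opponent, say $Q$, satisfies $q\le b-7$ and is permanently below $B$. In the continuation where $B$ wins both its games it reaches $b+6$, so to keep $B$ out of first place some other team must be able to reach $b+7$; determining which team can do so (only $B$'s unplayed partner, or an opponent already far ahead) pins down a team permanently leading $B$ by seven, and a parallel check forces the last team permanently below $B$. Once the structure ``one team at $\ge b+7$ points, two teams at $\le b-7$ points'' is secured, the contradiction is immediate: a team collects at most $12$ points in four matches, so $b+7\le 12$ forces $b\le 5$, and then the two trailing teams would have at most $b-7\le-2$ points, which is absurd. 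The mirror-image count handles the third place, where instead two teams would need $\ge c+7\ge 14$ points, again impossible.

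The hard part will be exactly the step I flagged: ruling out the ``swapping'' continuations in which the team occupying first (or fourth) place changes from one continuation to another while $B$ stays second, since then no rival is permanently above or permanently below $B$. This is where the bipartite structure of the remaining fixtures becomes essential, because it governs which pairs of teams can simultaneously leapfrog $B$: two opponents lying on the same side never meet again and can both sweep their games, whereas two teams that still play each other cannot both win that match. The case distinction according to whether the candidate teams still meet must therefore be carried out explicitly, and I expect this to be the longest portion of the proof. Finally, independence of the tie-breaking rule requires only a remark: a six-point gap can be decisive solely through the special head-to-head configurations of Propositions~\ref{Prop2}--\ref{Prop3}, which demand two completed meetings and concern only the champion and the bottom team, so they can never separate the two middle positions; under either goal difference or head-to-head records the seven-point threshold governs the middle of the table and the contradiction above goes through unchanged.
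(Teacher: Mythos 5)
Your plan is only an outline at the decisive point: the claim that ``guaranteed second'' forces one team to lead $B$ \emph{permanently} by at least seven points and two teams to trail it permanently by at least seven is exactly where the whole difficulty lives, and you explicitly defer it (``the longest portion of the proof'') without carrying out the case analysis. Worse, the step as sketched is unsound: in the continuation where $B$ wins both remaining games, no rival needs to reach $b+7$ to keep $B$ out of first place. The one team $B$ does not meet again---its twice-played partner $Z$ in your bipartite decomposition---can finish level with $B$ on $b+6$ points and be ranked above it by an \emph{already decided} head-to-head record; this is precisely the six-point exception in Proposition~\ref{Prop2}. That also undercuts your closing remark that the special head-to-head configurations ``concern only the champion and the bottom team'': the head-to-head tie-break binds for any pair that has met twice, and the pair $(B,Z)$ contesting first versus second is the canonical instance, so tie-break independence cannot be dispatched ``only by a remark.''

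The paper's proof inverts the quantifier and is far shorter: rather than assuming a guaranteed runner-up and extracting global structure, it directly exhibits, for each middle team, a continuation that changes its rank. After Matchday~4 every team has between $0$ and $12$ points, so the second- (or third-) placed team either has at least six points---hence is within six of the leader and can still finish first---or has at most six points---hence the fourth-placed team is within six of it and can still overtake it. The only obstruction to closing a six-point gap is the head-to-head exception, which requires two completed meetings; since each team has met exactly one opponent twice in the first four rounds, the exception cannot block both escape routes at once, so one of them is always open. Your observation that the remaining fixtures form the bipartite pattern $\{B,Z\}$ versus $\{X,Y\}$ is correct and is implicitly what makes this ``played twice'' bookkeeping work, but the contradiction scaffolding built on permanent seven-point gaps is both unnecessary and, in its present form, false; to salvage your route you would have to execute the swapping-continuation analysis you postponed and weaken the $\ge b+7$ threshold to ``$\ge b+6$ with a decided tie-break,'' at which point you essentially recover the paper's direct argument.
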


\begin{proof}
In four matches, the number of points scored is between 0 and 12.
Consequently, a team either (1) has some chance to win the tournament if it has at least six points and has not played twice against the first-placed team; or (2) can be overtaken by the fourth-placed team if it has at most six points and has not played twice against the fourth-placed team. It can be easily seen that at least one of these conditions holds for both the second-placed and the third-placed teams.
\end{proof}

After Matchday~5, it would be cumbersome to determine all possible cases to the analogy of Propositions~\ref{Prop2}--\ref{Prop4}.
However, there are only two remaining matches. Let $M$ be a high number that guarantees for the team scoring $M$ goals to be ranked above any other team having the same number of points without $M$ goals in any match. Assume that its opponent in the last round does not score any goal. Hence, it is sufficient to consider the four extreme cases where one team scores $M$ and the other scores zero goals as follows.

\begin{proposition} \label{Prop5}
Take four sets of results for the two games played on Matchday~6: (a) $M$-$0$, $M$-$0$; (b) $M$-$0$, $0$-$M$; (c) $0$-$M$, $M$-$0$; and (d) $0$-$M$, $0$-$M$; where $M$ is a high number.
The position of a team is guaranteed after Matchday~5 if it is the same in all scenarios (a) to (d).
\end{proposition}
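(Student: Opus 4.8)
The plan is to fix the team of interest, call it $T$, and to count the teams that finish above it. Since every matchday pairs up all four teams, on Matchday~6 team $T$ plays in exactly one game; write $T'$ for its opponent there and $U,V$ for the two teams meeting in the other Matchday~6 game. The final position of $T$ equals $1+N$, where $N=[T'\text{ above }T]+[U\text{ above }T]+[V\text{ above }T]$ and ``above'' is decided by points, ties being broken by the stated rule (the role of the large number $M$ being that a team scoring $M$ in some match outranks, on equal points, every team that never scores $M$). Because the position is $1+N$, proving the statement amounts to showing that $N$ takes the same value in every admissible pair of results whenever it takes the same value in the four scenarios (a)--(d).

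First I would record a monotonicity in $T$'s own game. Among all possible results of that game, $M$-$0$ (from $T$'s side) dominates every other outcome for $T$---more points, the largest possible goal difference, and the head-to-head win---while $0$-$M$ is dominated by every other outcome. Hence each of the three indicators is weakly monotone in $T$'s result, and $N$ is weakly smallest when $T$ wins $M$-$0$ and weakly largest when $T$ loses $0$-$M$. This yields the bracketing $N(T\text{ wins big},r)\le N(\cdot\,,r)\le N(T\text{ loses big},r)$ for every fixed result $r$ of the $U$--$V$ game. Consequently it suffices to show that, with $T$'s game pinned at either extreme, $N$ is constant as the $U$--$V$ game ranges over all its results; the squeeze then forces $N$ to the common corner value everywhere.

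With $T$'s game pinned, $[T'\text{ above }T]$ is fixed, so only $g:=[U\text{ above }T]+[V\text{ above }T]$ can move. As the $U$--$V$ result runs from ``$U$ wins big'' to ``$V$ wins big'', $U$'s standing falls and $V$'s rises monotonically, so $[U\text{ above }T]$ is a single downward $0/1$ step and $[V\text{ above }T]$ a single upward one. Using that the two endpoints already give the common value of $g$, a short enumeration shows that the only way $g$ can fail to be constant is that these two steps are misaligned, leaving a middle range where $U$ and $V$ are \emph{both} above $T$ (so $g$ jumps to $2$) or \emph{both} below (so $g$ drops to $0$).

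The hard part is to rule out this misaligned case, and this is exactly where all four corners---not merely two---are needed. In the ``both above'' case the middle result is a draw in which $U$ and $V$ each overtake $T$ even though $T$ has just won big; this forces $U$ and $V$ to hold already after Matchday~5 at least as many points as $T$'s maximal possible final total. I would then flip $T$'s own game to its losing extreme: $T$ now sits below both $U$ and $V$ at \emph{both} $U$--$V$ extremes, so the two corresponding corners have $N$ strictly larger than the corners in which $T$ wins big, contradicting the assumed equality of all four. The ``both below'' case is closed by the mirror image of the same points comparison. A final remark handles the tie-break: for goal difference the comparisons follow at once from the $M$-convention, and for head-to-head one uses that the $U$--$V$ and $T$--$T'$ results affect only the head-to-head entries internal to those two pairs, so the monotonicity of each ``above'' indicator in the two games is unchanged and the argument goes through verbatim. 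I expect this corner-disagreement step to be the main obstacle, since it is the only place where the apparently non-monotone behaviour of $N$ in the $U$--$V$ game is tamed.
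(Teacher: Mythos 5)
Your proposal is correct, but it takes a genuinely different route from the paper's. The paper reduces the claim to two pairwise overtaking conditions: team $A$'s position is fixed provided no team currently ranked above it can finish below it, and no team currently below can finish above it; by monotonicity of each pairwise comparison in the two teams' own results, each condition is tested at a single extreme scenario ($A$ at its best against the other at its worst, or conversely), and these extremes are among the corners (a)--(d). You instead work with the count $N$ of teams finishing above $T$, squeeze $N$ between its values at $T$'s two extreme results, and then confront directly the one point the paper leaves implicit: $N$ need not be monotone in the $U$--$V$ result, since $[U\ \text{above}\ T]$ and $[V\ \text{above}\ T]$ step in opposite directions, so in principle crossings could compensate and the position could agree at all four corners while a pairwise order flips in between. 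Your corner-contradiction step---if both $U$ and $V$ overtake $T$ at an intermediate result while $T$ wins big, their points force them above $T$ in both corners where $T$ loses big, contradicting four-corner equality via $[T'\ \text{above}\ T \mid T\ \text{wins}] \le [T'\ \text{above}\ T \mid T\ \text{loses}]$---is exactly what closes this gap and explains why all four scenarios, not two, are needed. Two minor refinements for a full write-up: the problematic intermediate result need not literally be a draw (one of $U,V$ may win there; then the team earning at most one point from it yields the points bound, and the other is above $T$ in the corner where it wins big with $T$ losing, so either the two $T$-loses corners already disagree or your monotonicity contradiction applies), and under the head-to-head tie-break an equal-points comparison between $T$ and a team not meeting it on Matchday~6 can go either way, so the inequalities should rest on points, invoking the $M$-convention only where a compared team actually posts the $M$-result. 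Net effect: the paper's argument is shorter and conceptually clean; yours is more explicit and self-contained about the non-monotone count, which the paper's pairwise reduction passes over.
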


\begin{proof}
The final ranking of team $A$ is already known at the end of the tournament if (1) there exists no team $B$ that is currently ranked higher than team $A$ but can be ranked lower than team $A$ at the end of the tournament, and (2) there exists no team $C$ that is currently ranked lower than team $A$ but can be ranked higher than team $A$ at the end of the tournament. Condition (1) holds if and only if team $A$ cannot be ranked higher than team $B$ if team $A$($B$) achieves the best (worst) possible result in its last match. Analogously, condition (2) holds if and only if team $A$ cannot be ranked lower than team $C$ if team $A$($C$) achieves the worst (best) possible result in its last match.
Since each team has only one match still to be played after Matchday~5, the required best and worst possible results are represented by two scenarios from the sets of results (a) to (d).
\end{proof}

\section{A case study: the UEFA Champions League} \label{Sec4}

Section~\ref{Sec3} has presented the classification of the games played in a double round-robin tournament with four teams into three categories. Now we want to understand how their probability depends on the schedule of the matches and which schedule should be followed to avoid weakly and strongly stakeless games to the extent possible. For this purpose, we have chosen to analyse the UEFA Champions League, probably the most prestigious annual football club competition around the world.

Since the 2003/04 season, this tournament starts with eight groups where the four teams play in a double round-robin format, that is, each team meets the other three teams in its group once home and once away. The top two clubs from each group progress to the Round of 16, where the group winners are matched with the runners-up subject to some restrictions \citep{BoczonWilson2023, KlossnerBecker2013}. Furthermore, the group winners play the second game at home, which provides an intrinsic advantage with respect to qualifying \citep{Bahamonde-BirkeBahamonde-Birke2023, Krumer2013, PagePage2007}.
The third-placed clubs go to the UEFA Europa League, the second-tier competition of European club football, while the fourth-placed clubs are eliminated.
Consequently, it can be reasonably assumed that each participating team wants to be ranked higher in the group stage.

For the draw of the Champions League group stage, a seeding procedure is followed to ensure homogeneity across groups. The 32 clubs are divided into four pots. One team is assigned from each pot to a group, subject to some restrictions: two teams from the same national association cannot play against each other, certain clashes are prohibited due to political reasons, and some clubs from the same country play on separate days when this is possible \citep{UEFA2021c}. According to the last constraint, there are pairings where one team is drawn into Groups A--D and another team is drawn into Groups E--H, hence the two teams would play on different days.

Seeding is based primarily on the UEFA club coefficients prior to the tournament. This measure of strength depends on the results achieved in the previous five seasons of the UEFA Champions League, the UEFA Europa League, and the UEFA Europa Conference League, including their qualifying phases \citep{UEFA2018g}. In order to support emerging clubs, the coefficient equals the association coefficient over the same period if it is higher than the sum of all points won in the previous five years.

Before the 2015/16 season, Pot 1 consisted of the eight strongest teams according to the coefficients, Pot 2 contained the next eight, and so on. The only exception was the titleholder, guaranteed to be in Pot 1. In the three seasons between 2015/16 and 2017/18, the reigning champion and the champions of the top seven associations were in Pot 1. The effects of this reform have been extensively analysed in the literature \citep{CoronaForrestTenaWiper2019, DagaevRudyak2019}. Since the 2018/19 season, Pot 1 contains the titleholders of both the Champions League and the Europa League, together with the champions of the top six associations. The other three pots are composed in accordance with the club coefficient ranking. The seeding rules are discussed in \citet{Csato2020a} and \citet[Chapter~2.3]{Csato2021a}. \citet{EngistMerkusSchafmeister2021} estimate the effect of seeding on tournament outcomes in European club football. 



\subsection{The simulation of match outcomes} \label{Sec41}

For any sports competition, historical data represent only a single realisation of several random factors. Hence, the analysis of tournament designs usually starts by finding a reasonable simulation technique that can generate the required number of results \citep{Csato2021b, ScarfYusofBilbao2009}.
To that end, it is necessary to connect the teams playing in the tournament studied to the teams whose performance is already known. It can be achieved by rating the teams, namely, by assigning a value to each team to measure its strength \citep{VanEetveldeLey2019}. This approach allows the identification of the teams by their ratings instead of their names.

Since there are 96 group matches in each season, the simulation model should be kept relatively simple to avoid overfitting (when a complex statistical estimation describes the random error in the data rather than the true relationships between the variables). Thus, instead of estimating attack and defence parameters, we measure the strength of the teams by UEFA club coefficients, which is a standard approach in the literature aimed at simulating the UEFA Champions League \citep{CoronaForrestTenaWiper2019, DagaevRudyak2019}. Another advantage of using UEFA club coefficients is that the decision-makers may be more willing to accept the results of a statistical method if it is based on a well-established rating.

In football, the number of goals scored is usually described by a Poisson distribution \citep{Maher1982, VanEetveldeLey2019}.
\citet{DagaevRudyak2019} propose such a model to evaluate the effects of the seeding system reform in the Champions League, introduced in 2015. Consider a single match between two clubs, and denote by $\lambda_{H,A}$ ($\lambda_{A,H}$) the expected number of goals scored by the home team $H$ (away team $A$) against the away team $A$ (the home team $H$). The probability $P_H$ of team $H$ scoring $n_{H,A}$ goals against team $A$ is given by
\begin{equation}
P_H \left( n_{H,A} \right) = \frac{\lambda_{H,A}^{n_{H,A}}\exp{\left( -\lambda_{H,A} \right)}}{n_{H,A}!},
\end{equation}
whereas the probability $P_A$ of team $A$ scoring $n_{A,H}$ goals against team $H$ is
\begin{equation}
P_A \left( n_{A,H} \right) = \frac{\lambda_{A,H}^{n_{A,H}}\exp{\left( -\lambda_{A,H} \right)}}{n_{A,H}!}.
\end{equation}

In order to determine the outcome of the match, parameters $\lambda_{H,A}$ and $\lambda_{A,H}$ need to be estimated on the basis of historical matches. We have started from the model of \citet{DagaevRudyak2019}, which relies on two empirical observations: home advantage is present in the Champions League and the number of goals scored correlates with the difference in the UEFA club coefficients. In particular:
\begin{equation}
\log \left( \lambda_{H,A} \right) = \alpha_H + \beta_H \cdot \left( R_H - \gamma_H R_A \right),
\end{equation}
\begin{equation}
\log \left( \lambda_{A,H} \right) = \alpha_A + \beta_A \cdot \left( R_A - \gamma_A R_H \right),
\end{equation}
with $R_H$ and $R_A$ being the UEFA club coefficients of the corresponding teams, and $\alpha_i, \beta_i, \gamma_i$ ($i \in \{ A,H \}$) being parameters. The main advantage of this choice is that the number of goals scored by the home and the away team depends on three different parameters.
A simpler version containing four parameters can be derived by setting $\gamma_H = \gamma_A = 1$.

We have studied two options for quantifying the strength of a club: the UEFA club coefficient and the seeding pot from which the team is drawn in the Champions League group stage. The latter can take only four different values. Furthermore, each group is guaranteed to consist of one team from each pot, consequently, the dataset contains the same number of teams for each possible value, as well as the same number of matches for any pair of ratings. For the sake of simplicity, all teams are classified according to the seeding system used in the given season.

Assuming the independence of the scores may be too restrictive because the two opposing teams compete against each other. Thus, if one team scores, then the other will exert more effort into scoring \citep{KarlisNtzoufras2003}. This correlation between the number of goals scored can be accounted for by bivariate Poisson distribution, which introduces an additional covariance parameter $c$ that reflects the connection between the scores of teams $H$ and $A$ \citep{VanEetveldeLey2019}. Thus, the expected scores of the two teams are   $\lambda_{H,A} + c$ and $\lambda_{A,H} + c$, respectively, where the correlation comes from the term $c$. This model has one more parameter.

To sum up, five model variants are considered:
\begin{itemize}
\item
6-parameter Poisson model based on UEFA club coefficients (6p coeff);
\item
4-parameter Poisson model based on UEFA club coefficients (4p coeff);
\item
6-parameter Poisson model based on pot allocation (6p pot);
\item
4-parameter Poisson model based on pot allocation (4p pot);
\item
7-parameter bivariate Poisson model based on UEFA club coefficients (Bivariate).
\end{itemize}

\begin{table}[t!]
\centering
\caption{Model parameters estimated by the maximum likelihood method \\ on the basis of Champions League seasons between 2003/04 and 2019/20}
\label{Table3}
\begin{threeparttable}
\rowcolors{3}{}{gray!20}
    \begin{tabularx}{\textwidth}{lc CCCCc} \toprule
    Parameter &       & 6p coeff & 4p coeff & 6p pot & 4p pot & 7p bivariate coeff \\ \bottomrule
    		& Estimation & 0.335 & 0.409 & 0.464 & 0.424 & 0.335 \\
    $\alpha_H$ & Lower 95\% & 0.318 & 0.403 & 0.447 & 0.419 & 0.318 \\
          	& Upper 95\% & 0.340 & 0.411 & 0.480 & 0.428 & 0.340 \\ \hline
     		& Estimation & 0.087 & 0.102 & 0.143 & 0.108 & 0.087 \\
    $\alpha_A$ & Lower 95\% & 0.072 & 0.093 & 0.123 & 0.101 & 0.072 \\
    		& Upper 95\% & 0.103 & 0.103 & 0.163 & 0.111 & 0.103 \\ \hline
    		& Estimation & 0.006 & 0.006 & $-$0.177 & $-$0.169 & 0.006 \\
    $\beta_H$ & Lower 95\% & 0.0062 & 0.0057 & $-$0.1720 & $-$0.1670 & 0.0062 \\
    		& Upper 95\% & 0.0065 & 0.0063 & $-$0.1800 & $-$0.1710 & 0.0065 \\ \hline
    		& Estimation & 0.006 & 0.006 & $-$0.182 & $-$0.175 & 0.006 \\
    $\beta_A$ & Lower 95\% & 0.0055 & 0.0057 & $-$0.1850 & $-$0.1790 & 0.0055 \\
    		& Upper 95\% & 0.0062 & 0.0063 & $-$0.1770 & $-$0.1730 & 0.0062 \\ \hline
    		& Estimation & 0.833 & --- & 0.910 & --- & 0.833 \\
    $\gamma_H$ & Lower 95\% & 0.810 & ---   & 0.884 & ---   & 0.810 \\
    		& Upper 95\% & 0.855 & ---   & 0.953 & ---   & 0.855 \\ \hline
    		& Estimation & 0.963 & --- & 0.922 & --- & 0.963 \\
    $\gamma_A$      & Lower 95\% & 0.938 & ---   & 0.897 & ---   & 0.938 \\
    		& Upper 95\% & 1.017 & ---   & 0.978 & ---   & 1.017 \\ \hline
    	& Estimation & --- & --- & --- & --- & $\exp \left( -12.458 \right)$ \\
    $c$ & Lower 95\% & --- & --- & --- & --- & $\exp \left( -13.517 \right)$ \\
    	& Upper 95\% & --- & --- & --- & --- & $\exp \left( -12.120 \right)$ \\ \toprule
	\end{tabularx}
\begin{tablenotes} \footnotesize
\item
The 95\% confidence intervals of the estimations are obtained by performing bootstrap resampling.
\end{tablenotes}
\end{threeparttable}
\end{table}

All parameters have been estimated by the maximum likelihood approach on the set of $8 \times 12 \times 17 = 1632$ matches played in the 17 seasons from 2003/04 to 2019/20. They are presented in Table~\ref{Table3}.
The optimal value of $c$, the correlation parameter of the bivariate model is positive but close to zero, hence, the bivariate Poisson model does not improve accuracy. This is in accordance with the finding of \citet{ChaterArrondelGayantLaslier2021} for the group stage of the FIFA World Cup. The reason is that the bivariate Poisson model is not able to grab a negative correlation between its components, however, the goals scored by home and away teams are slightly negatively correlated in our dataset.

The performance of the models is evaluated on two disjoint test sets, the seasons of 2020/21 and 2021/22. They are treated separately because most games in the 2020/21 edition were played behind closed doors owing to the COVID-19 pandemic, which might significantly affect home advantage \citep{BenzLopez2023, BrysonDoltonReadeSchreyerSingleton2021, FischerHaucap2021}.

Two metrics are calculated to compare the statistical models. \emph{Average hit probability} measures how accurately a model can determine the exact score of a match: we pick up the probability of the actual outcome, sum up these probabilities across all matches in the investigated dataset, and normalise this value by the number of matches and seasons. For instance, assume that two games have been played in a season such that the predicted probability for their known outcome is $0.1$ and $0.06$, respectively. The average hit probability will be $(0.1 + 0.06) / 2 = 0.08$.
A simple baseline model serves as a benchmark, where the chances are determined by relative frequencies in the seasons from 2003/04 to 2018/19.

\begin{table}[t!]
\centering
\caption{Average hit probability for the statistical models (in \%)}
\label{Table4}
\begin{threeparttable}
\rowcolors{3}{gray!20}{}
    \begin{tabularx}{0.8\textwidth}{L c CC} \toprule \hiderowcolors
    \multirow{2}[0]{*}{Model} & \multicolumn{3}{c}{Season(s)} \\
          & 2003/04--2019/20 & 2020/21 & 2021/22 \\ \bottomrule \showrowcolors   
    6p coeff & 7.016 (1) & 6.682 (2) & 6.148 (3) \\
    4p coeff & 7.012 (3) & 6.683 (1) & 6.146 (5) \\
    6p pot & 6.869 (4) & 6.443 (5) & 6.297 (1) \\
    4p pot & 6.868 (5) & 6.456 (4) & 6.297 (1) \\
    Bivariate & 7.016 (1) & 6.682 (2) & 6.148 (3) \\ \toprule
    Baseline & 6.123 (6) & 5.482 (6) & 5.485 (6) \\ \bottomrule
    \end{tabularx}
\begin{tablenotes} \footnotesize
\item
\emph{Baseline model}: The probability of any match outcome is determined by the relative frequency of this result in the training set (all seasons between 2003/04 and 2019/20).
\item
The ranks of the models are indicated in bracket.
\end{tablenotes}
\end{threeparttable}
\end{table}

The results are provided in Table~\ref{Table4}. The baseline model shows the worst performance, which is a basic criterion for the validity of the proposed methods. The bivariate Poisson variant does not outperform the 6-parameter Poisson based on UEFA club coefficients. Even though the club coefficient provides a finer measure of strength than the pot allocation, it does not result in a substantial improvement with respect to average hit probability.

The average hit probability does not count whether the prediction fails by a small margin (the forecast is 2-2 and the actual result is 1-1) or it is completely wrong (the forecast is 4-0 and the actual result is 1-3). However, there exists no straightforward ``distance'' among the possible outcomes. If the differences in the predicted and actual goals scored by the home and away teams are simply added, then the result of 2-2 will be farther from 1-1 than 2-1. But 1-1 and 2-2 are more similar than 1-1 and 2-1 from a sporting perspective since both 1-1 and 2-2 represent a draw. To resolve this issue, we have devised a distance metric for the outcome of the matches generated by the scalar product with a specific matrix, which has been inspired by the concept of Mahalanobis distance \citep{deMaesschalckJouan-RimbaudMassart2000}.

Let the final score of the game be $R_1 = (h_1, a_1)$, where $h_1$ is the number of goals for the home team, and $a_1$ is the number of goals for the away team. Analogously, denote by $R_2 = (h_2, a_2)$ the predicted result of this game. The distance between the two outcomes equals
\begin{equation} \label{eq5}
\Delta(R_1, R_2) = \sqrt{
\begin{bmatrix}h_1-h_2 &  a_1-a_2\end{bmatrix}
\begin{bmatrix}1 & -\pi \\ -\pi & 1\end{bmatrix}
\begin{bmatrix}h_1-h_2\\a_1-a_2\end{bmatrix}
},
\end{equation}
where the relative cost of adding one goal for both teams compared to adding one goal for one team is $2-2\pi$. Since it is reasonable to assume that $0 < 2-2\pi < 1$, thus, $1/2 < \pi < 1$, we consider three different values $\pi \in \left\{ 4/5;\, 9/10;\, 19/20 \right\}$.

For instance, with the final score of 2-0 and the forecast of 1-2, $h_1 - h_2 = 1$ and $a_1 - a_2 = -2$, which leads to
\begin{equation} \label{eq6}
\Delta(R_1, R_2) = \sqrt{
\begin{bmatrix} 1 &  -2 \end{bmatrix}
\begin{bmatrix} 1 + 2 \times \pi \\ -\pi - 2 \times 1 \end{bmatrix}
} = \sqrt{5 + 4 \times \pi}.
\end{equation}

\begin{table}[t!]
  \centering
  \caption{Selected values for the suggested distance of match scores in \eqref{eq5} ($\pi = 9/10$)}
  \label{Table5}
\resizebox{\textwidth}{!}{
\begin{threeparttable}
    \begin{tabular}{l cccc cccccc cccccc}
    \toprule
    Result      & 0-0   & 1-1   & 2-2   & 3-3   & 1-0   & 2-1   & 3-2   & 2-0   & 3-1   & 3-0   & 0-1   & 1-2   & 2-3   & 0-2   & 1-3   & 0-3 \\
    \bottomrule
    0-0   & \cellcolor{ForestGreen!0} 0 & \cellcolor{ForestGreen!7.45} 0.447 & \cellcolor{ForestGreen!14.9} 0.894 & \cellcolor{ForestGreen!22.3666666666667} 1.342 & \cellcolor{ForestGreen!16.6666666666667} 1 & \cellcolor{ForestGreen!19.7166666666667} 1.183 & \cellcolor{ForestGreen!24.7166666666667} 1.483 & \cellcolor{ForestGreen!33.3333333333333} 2 & \cellcolor{ForestGreen!35.75} 2.145 & \cellcolor{ForestGreen!50} 3 & \cellcolor{ForestGreen!16.6666666666667} 1 & \cellcolor{ForestGreen!19.7166666666667} 1.183 & \cellcolor{ForestGreen!24.7166666666667} 1.483 & \cellcolor{ForestGreen!33.3333333333333} 2 & \cellcolor{ForestGreen!35.75} 2.145 & \cellcolor{ForestGreen!50} 3 \\
    1-1   & \cellcolor{ForestGreen!7.45} 0.447 & \cellcolor{ForestGreen!0} 0 & \cellcolor{ForestGreen!7.45} 0.447 & \cellcolor{ForestGreen!14.9} 0.894 & \cellcolor{ForestGreen!16.6666666666667} 1 & \cellcolor{ForestGreen!16.6666666666667} 1 & \cellcolor{ForestGreen!19.7166666666667} 1.183 & \cellcolor{ForestGreen!32.4833333333333} 1.949 & \cellcolor{ForestGreen!33.3333333333333} 2 & \cellcolor{ForestGreen!48.8833333333333} 2.933 & \cellcolor{ForestGreen!16.6666666666667} 1 & \cellcolor{ForestGreen!16.6666666666667} 1 & \cellcolor{ForestGreen!19.7166666666667} 1.183 & \cellcolor{ForestGreen!32.4833333333333} 1.949 & \cellcolor{ForestGreen!33.3333333333333} 2 & \cellcolor{ForestGreen!48.8833333333333} 2.933 \\
    2-2   & \cellcolor{ForestGreen!14.9} 0.894 & \cellcolor{ForestGreen!7.45} 0.447 & \cellcolor{ForestGreen!0} 0 & \cellcolor{ForestGreen!7.45} 0.447 & \cellcolor{ForestGreen!19.7166666666667} 1.183 & \cellcolor{ForestGreen!16.6666666666667} 1 & \cellcolor{ForestGreen!16.6666666666667} 1 & \cellcolor{ForestGreen!33.3333333333333} 2 & \cellcolor{ForestGreen!32.4833333333333} 1.949 & \cellcolor{ForestGreen!48.8833333333333} 2.933 & \cellcolor{ForestGreen!19.7166666666667} 1.183 & \cellcolor{ForestGreen!16.6666666666667} 1 & \cellcolor{ForestGreen!16.6666666666667} 1 & \cellcolor{ForestGreen!33.3333333333333} 2 & \cellcolor{ForestGreen!32.4833333333333} 1.949 & \cellcolor{ForestGreen!48.8833333333333} 2.933 \\
    3-3   & \cellcolor{ForestGreen!22.3666666666667} 1.342 & \cellcolor{ForestGreen!14.9} 0.894 & \cellcolor{ForestGreen!7.45} 0.447 & \cellcolor{ForestGreen!0} 0 & \cellcolor{ForestGreen!24.7166666666667} 1.483 & \cellcolor{ForestGreen!19.7166666666667} 1.183 & \cellcolor{ForestGreen!16.6666666666667} 1 & \cellcolor{ForestGreen!35.75} 2.145 & \cellcolor{ForestGreen!33.3333333333333} 2 & \cellcolor{ForestGreen!50} 3 & \cellcolor{ForestGreen!24.7166666666667} 1.483 & \cellcolor{ForestGreen!19.7166666666667} 1.183 & \cellcolor{ForestGreen!16.6666666666667} 1 & \cellcolor{ForestGreen!35.75} 2.145 & \cellcolor{ForestGreen!33.3333333333333} 2 & \cellcolor{ForestGreen!50} 3 \\ \hline
    1-0   & \cellcolor{ForestGreen!16.6666666666667} 1 & \cellcolor{ForestGreen!16.6666666666667} 1 & \cellcolor{ForestGreen!19.7166666666667} 1.183 & \cellcolor{ForestGreen!24.7166666666667} 1.483 & \cellcolor{ForestGreen!0} 0 & \cellcolor{ForestGreen!7.45} 0.447 & \cellcolor{ForestGreen!14.9} 0.894 & \cellcolor{ForestGreen!16.6666666666667} 1 & \cellcolor{ForestGreen!19.7166666666667} 1.183 & \cellcolor{ForestGreen!33.3333333333333} 2 & \cellcolor{ForestGreen!32.4833333333333} 1.949 & \cellcolor{ForestGreen!33.3333333333333} 2 & \cellcolor{ForestGreen!35.75} 2.145 & \cellcolor{ForestGreen!48.8833333333333} 2.933 & \cellcolor{ForestGreen!50} 3 & \cellcolor{ForestGreen!65.4} 3.924 \\
    2-1   & \cellcolor{ForestGreen!19.7166666666667} 1.183 & \cellcolor{ForestGreen!16.6666666666667} 1 & \cellcolor{ForestGreen!16.6666666666667} 1 & \cellcolor{ForestGreen!19.7166666666667} 1.183 & \cellcolor{ForestGreen!7.45} 0.447 & \cellcolor{ForestGreen!0} 0 & \cellcolor{ForestGreen!7.45} 0.447 & \cellcolor{ForestGreen!16.6666666666667} 1 & \cellcolor{ForestGreen!16.6666666666667} 1 & \cellcolor{ForestGreen!32.4833333333333} 1.949 & \cellcolor{ForestGreen!33.3333333333333} 2 & \cellcolor{ForestGreen!32.4833333333333} 1.949 & \cellcolor{ForestGreen!33.3333333333333} 2 & \cellcolor{ForestGreen!48.8833333333333} 2.933 & \cellcolor{ForestGreen!48.8833333333333} 2.933 & \cellcolor{ForestGreen!64.9833333333333} 3.899 \\
    3-2   & \cellcolor{ForestGreen!24.7166666666667} 1.483 & \cellcolor{ForestGreen!19.7166666666667} 1.183 & \cellcolor{ForestGreen!16.6666666666667} 1 & \cellcolor{ForestGreen!16.6666666666667} 1 & \cellcolor{ForestGreen!14.9} 0.894 & \cellcolor{ForestGreen!7.45} 0.447 & \cellcolor{ForestGreen!0} 0 & \cellcolor{ForestGreen!19.7166666666667} 1.183 & \cellcolor{ForestGreen!16.6666666666667} 1 & \cellcolor{ForestGreen!33.3333333333333} 2 & \cellcolor{ForestGreen!35.75} 2.145 & \cellcolor{ForestGreen!33.3333333333333} 2 & \cellcolor{ForestGreen!32.4833333333333} 1.949 & \cellcolor{ForestGreen!50} 3 & \cellcolor{ForestGreen!48.8833333333333} 2.933 & \cellcolor{ForestGreen!65.4} 3.924 \\
    2-0   & \cellcolor{ForestGreen!33.3333333333333} 2 & \cellcolor{ForestGreen!32.4833333333333} 1.949 & \cellcolor{ForestGreen!33.3333333333333} 2 & \cellcolor{ForestGreen!35.75} 2.145 & \cellcolor{ForestGreen!16.6666666666667} 1 & \cellcolor{ForestGreen!16.6666666666667} 1 & \cellcolor{ForestGreen!19.7166666666667} 1.183 & \cellcolor{ForestGreen!0} 0 & \cellcolor{ForestGreen!7.45} 0.447 & \cellcolor{ForestGreen!16.6666666666667} 1 & \cellcolor{ForestGreen!48.8833333333333} 2.933 & \cellcolor{ForestGreen!48.8833333333333} 2.933 & \cellcolor{ForestGreen!50} 3 & \cellcolor{ForestGreen!64.9833333333333} 3.899 & \cellcolor{ForestGreen!65.4} 3.924 & \cellcolor{ForestGreen!81.3166666666667} 4.879 \\
    3-1   & \cellcolor{ForestGreen!35.75} 2.145 & \cellcolor{ForestGreen!33.3333333333333} 2 & \cellcolor{ForestGreen!32.4833333333333} 1.949 & \cellcolor{ForestGreen!33.3333333333333} 2 & \cellcolor{ForestGreen!19.7166666666667} 1.183 & \cellcolor{ForestGreen!16.6666666666667} 1 & \cellcolor{ForestGreen!16.6666666666667} 1 & \cellcolor{ForestGreen!7.45} 0.447 & \cellcolor{ForestGreen!0} 0 & \cellcolor{ForestGreen!16.6666666666667} 1 & \cellcolor{ForestGreen!50} 3 & \cellcolor{ForestGreen!48.8833333333333} 2.933 & \cellcolor{ForestGreen!48.8833333333333} 2.933 & \cellcolor{ForestGreen!65.4} 3.924 & \cellcolor{ForestGreen!64.9833333333333} 3.899 & \cellcolor{ForestGreen!81.3166666666667} 4.879 \\
    3-0   & \cellcolor{ForestGreen!50} 3 & \cellcolor{ForestGreen!48.8833333333333} 2.933 & \cellcolor{ForestGreen!48.8833333333333} 2.933 & \cellcolor{ForestGreen!50} 3 & \cellcolor{ForestGreen!33.3333333333333} 2 & \cellcolor{ForestGreen!32.4833333333333} 1.949 & \cellcolor{ForestGreen!33.3333333333333} 2 & \cellcolor{ForestGreen!16.6666666666667} 1 & \cellcolor{ForestGreen!16.6666666666667} 1 & \cellcolor{ForestGreen!0} 0 & \cellcolor{ForestGreen!65.4} 3.924 & \cellcolor{ForestGreen!64.9833333333333} 3.899 & \cellcolor{ForestGreen!65.4} 3.924 & \cellcolor{ForestGreen!81.3166666666667} 4.879 & \cellcolor{ForestGreen!81.3166666666667} 4.879 & \cellcolor{ForestGreen!97.4666666666667} 5.848 \\ \hline
    0-1   & \cellcolor{ForestGreen!16.6666666666667} 1 & \cellcolor{ForestGreen!16.6666666666667} 1 & \cellcolor{ForestGreen!19.7166666666667} 1.183 & \cellcolor{ForestGreen!24.7166666666667} 1.483 & \cellcolor{ForestGreen!32.4833333333333} 1.949 & \cellcolor{ForestGreen!33.3333333333333} 2 & \cellcolor{ForestGreen!35.75} 2.145 & \cellcolor{ForestGreen!48.8833333333333} 2.933 & \cellcolor{ForestGreen!50} 3 & \cellcolor{ForestGreen!65.4} 3.924 & \cellcolor{ForestGreen!0} 0 & \cellcolor{ForestGreen!7.45} 0.447 & \cellcolor{ForestGreen!14.9} 0.894 & \cellcolor{ForestGreen!16.6666666666667} 1 & \cellcolor{ForestGreen!19.7166666666667} 1.183 & \cellcolor{ForestGreen!33.3333333333333} 2 \\
    1-2   & \cellcolor{ForestGreen!19.7166666666667} 1.183 & \cellcolor{ForestGreen!16.6666666666667} 1 & \cellcolor{ForestGreen!16.6666666666667} 1 & \cellcolor{ForestGreen!19.7166666666667} 1.183 & \cellcolor{ForestGreen!33.3333333333333} 2 & \cellcolor{ForestGreen!32.4833333333333} 1.949 & \cellcolor{ForestGreen!33.3333333333333} 2 & \cellcolor{ForestGreen!48.8833333333333} 2.933 & \cellcolor{ForestGreen!48.8833333333333} 2.933 & \cellcolor{ForestGreen!64.9833333333333} 3.899 & \cellcolor{ForestGreen!7.45} 0.447 & \cellcolor{ForestGreen!0} 0 & \cellcolor{ForestGreen!7.45} 0.447 & \cellcolor{ForestGreen!16.6666666666667} 1 & \cellcolor{ForestGreen!16.6666666666667} 1 & \cellcolor{ForestGreen!32.4833333333333} 1.949 \\
    2-3   & \cellcolor{ForestGreen!24.7166666666667} 1.483 & \cellcolor{ForestGreen!19.7166666666667} 1.183 & \cellcolor{ForestGreen!16.6666666666667} 1 & \cellcolor{ForestGreen!16.6666666666667} 1 & \cellcolor{ForestGreen!35.75} 2.145 & \cellcolor{ForestGreen!33.3333333333333} 2 & \cellcolor{ForestGreen!32.4833333333333} 1.949 & \cellcolor{ForestGreen!50} 3 & \cellcolor{ForestGreen!48.8833333333333} 2.933 & \cellcolor{ForestGreen!65.4} 3.924 & \cellcolor{ForestGreen!14.9} 0.894 & \cellcolor{ForestGreen!7.45} 0.447 & \cellcolor{ForestGreen!0} 0 & \cellcolor{ForestGreen!19.7166666666667} 1.183 & \cellcolor{ForestGreen!16.6666666666667} 1 & \cellcolor{ForestGreen!33.3333333333333} 2 \\
    0-2   & \cellcolor{ForestGreen!33.3333333333333} 2 & \cellcolor{ForestGreen!32.4833333333333} 1.949 & \cellcolor{ForestGreen!33.3333333333333} 2 & \cellcolor{ForestGreen!35.75} 2.145 & \cellcolor{ForestGreen!48.8833333333333} 2.933 & \cellcolor{ForestGreen!48.8833333333333} 2.933 & \cellcolor{ForestGreen!50} 3 & \cellcolor{ForestGreen!64.9833333333333} 3.899 & \cellcolor{ForestGreen!65.4} 3.924 & \cellcolor{ForestGreen!81.3166666666667} 4.879 & \cellcolor{ForestGreen!16.6666666666667} 1 & \cellcolor{ForestGreen!16.6666666666667} 1 & \cellcolor{ForestGreen!19.7166666666667} 1.183 & \cellcolor{ForestGreen!0} 0 & \cellcolor{ForestGreen!7.45} 0.447 & \cellcolor{ForestGreen!16.6666666666667} 1 \\
    1-3   & \cellcolor{ForestGreen!35.75} 2.145 & \cellcolor{ForestGreen!33.3333333333333} 2 & \cellcolor{ForestGreen!32.4833333333333} 1.949 & \cellcolor{ForestGreen!33.3333333333333} 2 & \cellcolor{ForestGreen!50} 3 & \cellcolor{ForestGreen!48.8833333333333} 2.933 & \cellcolor{ForestGreen!48.8833333333333} 2.933 & \cellcolor{ForestGreen!65.4} 3.924 & \cellcolor{ForestGreen!64.9833333333333} 3.899 & \cellcolor{ForestGreen!81.3166666666667} 4.879 & \cellcolor{ForestGreen!19.7166666666667} 1.183 & \cellcolor{ForestGreen!16.6666666666667} 1 & \cellcolor{ForestGreen!16.6666666666667} 1 & \cellcolor{ForestGreen!7.45} 0.447 & \cellcolor{ForestGreen!0} 0 & \cellcolor{ForestGreen!16.6666666666667} 1 \\
    0-3   & \cellcolor{ForestGreen!50} 3 & \cellcolor{ForestGreen!48.8833333333333} 2.933 & \cellcolor{ForestGreen!48.8833333333333} 2.933 & \cellcolor{ForestGreen!50} 3 & \cellcolor{ForestGreen!65.4} 3.924 & \cellcolor{ForestGreen!64.9833333333333} 3.899 & \cellcolor{ForestGreen!65.4} 3.924 & \cellcolor{ForestGreen!81.3166666666667} 4.879 & \cellcolor{ForestGreen!81.3166666666667} 4.879 & \cellcolor{ForestGreen!97.4666666666667} 5.848 & \cellcolor{ForestGreen!33.3333333333333} 2 & \cellcolor{ForestGreen!32.4833333333333} 1.949 & \cellcolor{ForestGreen!33.3333333333333} 2 & \cellcolor{ForestGreen!16.6666666666667} 1 & \cellcolor{ForestGreen!16.6666666666667} 1 & \cellcolor{ForestGreen!0} 0 \\ \toprule
    \end{tabular}
    \begin{tablenotes} 
\item
Darker colour indicates a higher value. 
    \end{tablenotes}
\end{threeparttable}
}
\end{table}


The distances between the outcomes defined by this metric can be seen in Table~\ref{Table5} if $\pi = 9/10$ (they have a Pearson correlation over $0.99$ with the distances under $\pi = 4/5$ and $\pi = 19/20$). For example, the error derived in equation~\eqref{eq6} ($\sqrt{8.6} \approx 2.933$) can be found at the intersections of 2-0 and 1-2 (eighth row, twelfth column; twelfth row, eighth column) since formula~\eqref{eq5} is symmetric. Note that the same difference in goals scored results in the same distance, regardless of the number of goals scored: the distance of 1-0 and 2-1 equals the distance of 2-0 and 3-1, as well as the distance of 0-0 and 3-2 equals the distance of 1-0 and 3-3.
The measure is called \emph{distance of match scores} in the following.

\begin{table}[t!]
\centering
\caption{Average distances of match scores \eqref{eq5} for the statistical models}
\label{Table6}
\centerline{
\begin{threeparttable}
\rowcolors{3}{}{gray!20}
\begin{footnotesize}
    \begin{tabularx}{1.18\textwidth}{l CCC CCC CCC} \toprule \hiderowcolors
    \multirow{2}[0]{*}{Model} & \multicolumn{9}{c}{Season(s)} \\
          & \multicolumn{3}{c}{2003/04--2019/20} & \multicolumn{3}{c}{2020/21} & \multicolumn{3}{c}{2021/22} \\
    Parameter $\pi$ & \multicolumn{1}{c}{4/5} & \multicolumn{1}{c}{9/10} & \multicolumn{1}{c}{19/20} & \multicolumn{1}{c}{4/5} & \multicolumn{1}{c}{9/10} & \multicolumn{1}{c}{19/20} & \multicolumn{1}{c}{4/5} & \multicolumn{1}{c}{9/10} & \multicolumn{1}{c}{19/20} \\ \bottomrule \showrowcolors
    6p coeff & 1.986 (3) & 2.041 (3) & 1.923 (2) & 2.060 (3) & 2.199 (3) & 2.199 (3) & 2.095 (3) & 2.163 (3) & 2.038 (3) \\
    4p coeff & 1.978 (2) & 1.958 (2) & 1.922 (1) & 2.105 (5) & 2.068 (2) & 2.068 (2) & 2.118 (5) & 2.013 (2) & 2.043 (5) \\
    6p pot & 2.008 (5) & 2.054 (5) & 1.955 (5) & 2.053 (2) & 2.218 (5) & 2.218 (5) & 2.062 (2) & 2.175 (5) & 2.009 (2) \\
    4p pot & 1.954 (1) & 1.957 (1) & 1.954 (4) & 2.022 (1) & 2.066 (1) & 2.066 (1) & 2.061 (1) & 2.012 (1) & 2.008 (1) \\
    7p bivariate coeff & 1.986 (3) & 2.041 (3) & 1.923 (2) & 2.060 (3) & 2.199 (3) & 2.199 (3) & 2.095 (3) & 2.163 (3) & 2.038 (3) \\ \toprule
    Baseline & 2.069 (6) & 2.095 (6) & 2.082 (6) & 2.201 (6) & 2.247 (6) & 2.233 (6) & 2.178 (6) & 2.210 (6) & 2.132  (6) \\ \bottomrule
    \end{tabularx}
\end{footnotesize}
\begin{tablenotes} \footnotesize
\item
\emph{Baseline model}: The probability of any match outcome is determined by the relative frequency of this result in the training set (all seasons between 2003/04 and 2019/20).
\item
The ranks of the models are indicated in bracket.
\end{tablenotes}
\end{threeparttable}
}
\end{table}

Table~\ref{Table6} evaluates the six statistical models (including the baseline) according to the average distances of match scores over three sets of games. In contrast to the average hit probability, now a lower value is preferred. There is only a slight difference between the performance of variants based on UEFA club coefficients and pot allocation---and the latter does not seem to provide a worse estimation. Analogously, using six parameters instead of four does not improve the accuracy of the model. Since the schedule of group matches can be made dependent on the pots of the teams and UEFA club coefficients are not able to increase the predictive power, we have decided for the 4-parameter Poisson model based on pot allocation to simulate the group matches played in the Champions League. Note that this model has the best out-of-sample performance for all the three values of parameter $\pi$ considered.

However, the distance of match scores does not take the outcome (home win/draw/away win) into account. For example, the distances of 2-1 to 1-1 and to 3-1 are the same, but it can be argued that 3-1 is more similar to 2-1 since both results mean three points for the home team. Hence, formula~\eqref{eq5} can be modified as follows:
\begin{equation} \label{eq7}
\Theta \left( R_1, R_2 \right) = \Delta \left( R_1, R_2 \right) + \chi \left( R_1, R_2 \right),
\end{equation}
where
\[
\chi \left( R_1, R_2 \right) = 
\begin{cases}
0 & \text{if } \left( h_1 > a_1 \right) \land \left( h_2 > a_2 \right) \\
0 & \text{if } \left( h_1 = a_1 \right) \land \left( h_2 = a_2 \right) \\
0 & \text{if } \left( h_1 < a_1 \right) \land \left( h_2 < a_2 \right) \\
1 & \text{if } \left( h_1 > a_1 \right) \land \left( h_2 = a_2 \right) \\
1 & \text{if } \left( h_1 < a_1 \right) \land \left( h_2 = a_2 \right) \\
1 & \text{if } \left( h_1 = a_1 \right) \land \left( h_2 > a_2 \right) \\
1 & \text{if } \left( h_1 = a_1 \right) \land \left( h_2 < a_2 \right) \\
2 & \text{if } \left( h_1 > a_1 \right) \land \left( h_2 < a_2 \right) \\
2 & \text{if } \left( h_1 < a_1 \right) \land \left( h_2 > a_2 \right). \\
\end{cases}
\]
Thus, the term $\chi \left( R_1, R_2 \right)$ equals zero if the predicted and actual outcomes coincide, one if the prediction makes a small mistake (draw instead of home/away win and vice versa), and two if the prediction is the worst (home/away loss instead of home/away win).
For instance, if the final score is 2-1, then the error from equation~\eqref{eq7} is the same ($3$) for the forecasts 1-1 and 5-1.
This measure is called \emph{distance of match scores and outcomes}.

\begin{table}[t!]
\centering
\caption{Average distances of match scores and outcomes \eqref{eq7} for the statistical models}
\label{Table7}
\begin{threeparttable}
\rowcolors{3}{gray!20}{}
    \begin{tabularx}{0.8\textwidth}{lc CC} \toprule \hiderowcolors
    \multirow{2}[0]{*}{Model} & \multicolumn{3}{c}{Season(s)} \\
          & 2003/04--2019/20 & \multicolumn{1}{c}{2020/21} & 2021/22 \\ \bottomrule \showrowcolors
    6p coeff & 2.764 (4) & 2.941 (3) & 2.914 (5) \\
    4p coeff & 2.676 (1) & 2.821 (2) & 2.748 (2) \\
    6p pot & 2.788 (5) & 2.991 (5) & 2.885 (3) \\
    4p pot & 2.687 (2) & 2.820 (1) & 2.714 (1) \\
    7p bivariate coeff & 2.764 (4) & 2.941 (3) & 2.913 (5) \\ \toprule
    Baseline & 2.916 (6) & 3.131 (6) & 3.061 (6) \\ \bottomrule
    \end{tabularx}
\begin{tablenotes} \footnotesize
\item
\emph{Baseline model}: The probability of any match outcome is determined by the relative frequency of this result in the training set (all seasons between 2003/04 and 2019/20).
\item
The ranks of the models are indicated in bracket.
\item
The value of parameter $\pi$ is $9/10$.
\end{tablenotes}
\end{threeparttable}
\end{table}

Table~\ref{Table7} reports the average distances of match scores and outcomes according to the modified formula~\eqref{eq7} for $\pi = 9/10$. The main message remains unchanged: it makes no sense to use a model having more than four parameters, and our chosen variant 4p pot is competitive with 4p coeff, however, it is simpler and more convenient for scheduling purposes.

\input{Figure2_distribution_of_goals}

\begin{table}[t!]
\caption{The number of matches with a given outcome in the UEFA Champions League and the expected number of matches from the selected simulation model (4p pot)}
\label{Table8}

\begin{subtable}{\linewidth}
\centering
\caption{Seasons between 2003/04 and 2019/20}
\label{Table8a}
\rowcolors{1}{}{gray!20}
    \begin{tabularx}{\linewidth}{l CCCCC} \toprule
    Final score & 0     & 1     & 2     & 3     & 4 \\ \bottomrule
    0     & $\quad$ 115 $\quad$ ($103.0 \pm 9.8$) & $\quad$ 109 $\quad$ ($124.6 \pm 10.7$) & $\quad$ 83 $\quad$ ($81.2 \pm 8.7$) & $\quad$ 48 $\quad$ ($38.1 \pm 6.0$) & $\quad$ 20 $\quad$ ($14.0 \pm 3.7$) \\
    1     & $\quad$ 157 $\quad$ ($159.4 \pm 12.0$) & $\quad$ 175 $\quad$ ($175.8 \pm 12.5$) & $\quad$ 96 $\quad$ ($106.4 \pm 9.9$) & $\quad$ 38 $\quad$ ($46.8 \pm 6.7$) & $\quad$ 19 $\quad$ ($16.6 \pm 4.0$) \\
    2     & $\quad$ 138 $\quad$ ($133.8 \pm 11.0$) & $\quad$ 139 $\quad$ ($135.2 \pm 11.1$) & $\quad$ 76 $\quad$ ($74.7 \pm 8.4$) & $\quad$ 25 $\quad$ ($30.3 \pm 5.4$) & $\quad$ 5 $\quad$ ($9.9 \pm 3.1$) \\
    3     & $\quad$ 84 $\quad$ ($80.5 \pm 8.6$) & $\quad$ 72 $\quad$ ($75.6 \pm 8.5$) & $\quad$ 36 $\quad$ ($38.5 \pm 6.1$) & $\quad$ 14 $\quad$ ($14.3 \pm 3.8$) & $\quad$ 2 $\quad$ ($4.1 \pm 2.0$) \\
    4     & $\quad$ 44 $\quad$ ($38.9 \pm 6.1$) & $\quad$ 22 $\quad$ ($34.0 \pm 5.7$) & $\quad$ 18 $\quad$ ($16.0 \pm 4.0$) & $\quad$ 5 $\quad$ ($5.4 \pm 2.3$) & $\quad$ 2 $\quad$ ($1.6 \pm 1.3$) \\ \toprule
    \end{tabularx}
\end{subtable}

\vspace{0.25cm}
\begin{subtable}{\linewidth}
\centering
\caption{Season 2020/21}
\label{Table8b}
\rowcolors{1}{}{gray!20}
    \begin{tabularx}{\linewidth}{l CCCCC} \toprule
    Final score & 0     & 1     & 2     & 3     & 4 \\ \bottomrule
    0     & $\quad$ 5 $\quad$ ($6.1 \pm 2.4$) & $\quad$ 3 $\quad$ ($7.3 \pm 2.6$) & $\quad$ 8 $\quad$ ($4.8 \pm 2.1$) & $\quad$ 4 $\quad$ ($2.2 \pm 1.5$) & $\quad$ 4 $\quad$ ($0.8 \pm 0.9$) \\
    1     & $\quad$ 6 $\quad$ ($9.4 \pm 2.9$) & $\quad$ 9 $\quad$ ($10.3 \pm 3.0$) & $\quad$ 7 $\quad$ ($6.3 \pm 2.4$) & $\quad$ 3 $\quad$ ($2.8 \pm 1.6$) & $\quad$ 1 $\quad$ ($1.0 \pm 1.0$) \\
    2     & $\quad$ 7 $\quad$ ($7.9 \pm 2.7$) & $\quad$ 5 $\quad$ ($8.0 \pm 2.7$) & $\quad$ 6 $\quad$ ($4.4 \pm 2.0$) & $\quad$ 2 $\quad$ ($1.8 \pm 1.3$) & $\quad$ 0 $\quad$ ($0.6 \pm 0.8$) \\
    3     & $\quad$ 7 $\quad$ ($4.7 \pm 2.1$) & $\quad$ 5 $\quad$ ($4.4 \pm 2.1$) & $\quad$ 4 $\quad$ ($2.3 \pm 1.5$) & $\quad$ 0 $\quad$ ($0.8 \pm 0.9$) & $\quad$ 1 $\quad$ ($0.2 \pm 0.5$) \\
    4     & $\quad$ 2 $\quad$ ($2.3 \pm 1.5$) & $\quad$ 1 $\quad$ ($2.0 \pm 1.4$) & $\quad$ 0 $\quad$ ($0.9 \pm 1.0$) & $\quad$ 0 $\quad$ ($0.3 \pm 0.6$) & $\quad$ 0 $\quad$ ($0.1 \pm 0.3$) \\ \toprule
    \end{tabularx}
\end{subtable}

\vspace{0.25cm}
\begin{subtable}{\linewidth}
\centering
\caption{Season 2021/22}
\label{Table8c}
\begin{threeparttable}
\rowcolors{1}{}{gray!20}
    \begin{tabularx}{\linewidth}{l CCCCC} \toprule
    Final score & 0     & 1     & 2     & 3     & 4 \\ \bottomrule
    0     & $\quad$ 6 $\quad$ ($6.1 \pm 2.4$) & $\quad$ 5 $\quad$ ($7.3 \pm 2.6$) & $\quad$ 1 $\quad$ ($4.8 \pm 2.1$) & $\quad$ 3 $\quad$ ($2.2 \pm 1.5$) & $\quad$ 1 $\quad$ ($0.8 \pm 0.9$) \\
    1     & $\quad$ 9 $\quad$ ($9.4 \pm 2.9$) & $\quad$ 7 $\quad$ ($10.3 \pm 3.0$) & $\quad$ 8 $\quad$ ($6.3 \pm 2.4$) & $\quad$ 4 $\quad$ ($2.8 \pm 1.6$) & $\quad$ 2 $\quad$ ($1.0 \pm 1.0$) \\
    2     & $\quad$ 10 $\quad$ ($7.9 \pm 2.7$) & $\quad$ 7 $\quad$ ($8.0 \pm 2.7$) & $\quad$ 3 $\quad$ ($4.4 \pm 2.0$) & $\quad$ 2 $\quad$ ($1.8 \pm 1.3$) & $\quad$ 0 $\quad$ ($0.6 \pm 0.8$) \\
    3     & $\quad$ 2 $\quad$ ($4.7 \pm 2.1$) & $\quad$ 3 $\quad$ ($4.4 \pm 2.1$) & $\quad$ 3 $\quad$ ($2.3 \pm 1.5$) & $\quad$ 2 $\quad$ ($0.8 \pm 0.9$) & $\quad$ 0 $\quad$ ($0.2 \pm 0.5$) \\
    4     & $\quad$ 5 $\quad$ ($2.3 \pm 1.5$) & $\quad$ 2 $\quad$ ($2.0 \pm 1.4$) & $\quad$ 2 $\quad$ ($0.9 \pm 1.0$) & $\quad$ 0 $\quad$ ($0.3 \pm 0.6$) & $\quad$ 0 $\quad$ ($0.1 \pm 0.3$) \\ \toprule
    \end{tabularx}
\begin{tablenotes} \footnotesize
\item
Goals scored by the home team are in the rows, goals scored by the away team are in the columns.
\item
Games where one team scored at least five goals are not presented.
\item
The numbers in parenthesis indicate the average number of occurrences based on simulations $\pm$ standard deviations.
\end{tablenotes}
\end{threeparttable}
\end{subtable}
\end{table}

Finally, the chosen specification is demonstrated to describe well the unknown score-generating process.
First, Figure~\ref{Fig2} shows the real goal distributions and the one implied by our Poisson model that gives the same forecast for each season as the teams are identified by the pot from which they are drawn.
Second, the final scores of the games are analysed: Table~\ref{Table8} presents the number of matches with the given outcome in the corresponding season(s) and the number of occurrences for these events according to the chosen simulation model. Again, the forecast is the same for any season since the groups cannot be distinguished by the strengths of the clubs.

To conclude, the 4-parameter Poisson model based on pot allocation provides a good approximation to the empirical data. This is essential for further analysis: since each group contains one team from each pot, the groups are identical with respect to our simulation model. Consequently, the performance of any schedule is the same for the game classification scheme. Otherwise, the predicted probability of a (weakly/strongly) stakeless match might depend on other characteristics of the clubs playing the group, for instance, their exact UEFA club coefficients, which are not known before the group draw (although the coefficients of all the 32 teams are naturally known, it remains uncertain what teams will play in a particular group).

\subsection{Feasible schedules} \label{Sec42}

Section~\ref{Sec41} provides a tool to simulate the group stage of the UEFA Champions League. However, we are interested in how stakeless games can be avoided, for which purpose the schedule of the matches can be chosen by the organiser subject to some constraints.

The regulation of the UEFA Champions League provides surprisingly little information on how the group matches are scheduled \citep[Article~16.02]{UEFA2021c}: ``\emph{A club does not play more than two home or two away matches in a row and each club plays one home match and one away match on the first and last two matchdays.}''
In the Champions League seasons from 2003/04 to 2020/21, Matchday 4/5/6 was the mirror image of Matchday 3/1/2, respectively. Consequently, the same two teams played at home in the first and last rounds since Matchday~6 mirrors Matchday~2, where the two teams that play at home in Matchday~1 have an away game. The arrangement has been changed from the 2021/22 season such that Matchday 4/5/6 is the mirror image of Matchday 3/2/1. However, this is not described in the regulation, we have explored it only through ``reverse engineering''.

\begin{table}[t!]
\centering
\caption{Valid schedules for the UEFA Champions League groups and \\ the number of groups using them between 2018/19 and 2022/23}
\label{Table9}
\begin{threeparttable}
\rowcolors{3}{gray!20}{}
    \begin{tabularx}{0.8\textwidth}{l CCCC C} \toprule \hiderowcolors
    \multicolumn{1}{c}{\multirow{2}[0]{*}{Schedule}} & \multicolumn{2}{c}{Matchday 5} & \multicolumn{2}{c}{Matchday 6} & \multicolumn{1}{c}{\multirow{2}[0]{*}{Groups}} \\
    	  & Home  & Away  & Home  & Away & \\ \bottomrule \showrowcolors
    1231  & 1     & 2     & 3     & 1     & 10 \\
          & 4     & 3     & 2     & 4     &  \\ \hline
    2113  & 2     & 1     & 1     & 3     & 2 \\
          & 3     & 4     & 4     & 2     &  \\ \hline
    1241  & 1     & 2     & 4     & 1     & 3 \\
          & 3     & 4     & 2     & 3     &  \\ \hline
    2114  & 2     & 1     & 1     & 4     & 5 \\
          & 4     & 3     & 3     & 2     &  \\ \hline
    1321  & 1     & 3     & 2     & 1     & 3 \\
          & 4     & 2     & 3     & 4     &  \\ \hline
    3112  & 3     & 1     & 1     & 2     & 2 \\
          & 2     & 4     & 4     & 3     &  \\ \hline
    1341  & 1     & 3     & 4     & 1     & 5 \\
          & 2     & 4     & 3     & 2     &  \\ \hline
    3114  & 3     & 1     & 1     & 4     & 2 \\
          & 4     & 2     & 2     & 3     &  \\ \hline
    1421  & 1     & 4     & 2     & 1     & 0 \\
          & 3     & 2     & 4     & 3     &  \\ \hline
    4112  & 4     & 1     & 1     & 2     & 4 \\
          & 2     & 3     & 3     & 4     &  \\ \hline
    1431  & 1     & 4     & 3     & 1     & 3 \\
          & 2     & 3     & 4     & 2     &  \\ \hline
    4113  & 4     & 1     & 1     & 3     & 1 \\
          & 3     & 2     & 2     & 4     &  \\ \bottomrule
    \end{tabularx}
    
    \begin{tablenotes} \footnotesize
\item
The numbers in the four middle columns indicate the pots from which the teams are drawn. 
\item
The numbers in the last column show the number of UEFA Champions League groups where the given schedule has been followed in the five seasons between 2018/19 and 2022/23.
    \end{tablenotes}
\end{threeparttable}
\end{table}

Therefore, there are 12 valid schedules for the last two matchdays as an arbitrarily chosen team has three possible opponents for Matchday~5, two possible opponents for Matchday~6 (it cannot play against the same team as in Matchday~5), and can play at home either in Matchday~5 or Matchday~6. The derived options are listed in Table~\ref{Table9}, together with their prevalence in the five seasons between 2018/19 and 2022/23. It seems that all of these options can be accepted by the UEFA.
Since each group consists of one team from each of the four pots, the clubs are identified by their pot in the following, that is, team $i$ represents the team drawn from Pot $i$.

According to Section~\ref{Sec41}, the best simulation model is the 4-parameter Poisson based on pot allocation (4p pot), which will be used to derive the numerical results. In particular, all group matches are simulated 1 million times, but the results of the matches played in the last matchday(s) are disregarded to calculate the probability of a stakeless game under the given group schedule. For sample size $N$, the error of a simulated probability $P$ is $\sqrt{P(1-P) / N}$. Since even the smallest $P$ exceeds 2.5\% and 1 million simulation runs are implemented, the error always remains below 0.016\%. Therefore, confidence intervals will not be provided because the averages differ reliably between the possible schedules.

\subsection{Results} \label{Sec43}

\begin{table}[t!]
\centering
\caption{The probability of stakeless games}
\label{Table10}
\begin{threeparttable}
\rowcolors{3}{gray!20}{}
    \begin{tabularx}{\textwidth}{l CCC} \toprule \hiderowcolors
    \multirow{2}[0]{*}{Schedule} & \multicolumn{3}{c}{The probability of stakeless games in percentage (\%)} \\
          & Weakly in Matchday 5 & Weakly in Matchday 6 & Strongly in Matchday 6 \\ \bottomrule \showrowcolors
    1231  & 2.62 (3)\textcolor{gray!20}{0} & 35.37 (8)\textcolor{gray!20}{0} & \textcolor{gray!20}{0}8.02 (5)\textcolor{gray!20}{0} \\
    2113  & 2.60 (1)\textcolor{white}{0} & 35.40 (9)\textcolor{white}{0} & \textcolor{white}{0}7.94 (4)\textcolor{white}{0} \\
    1241  & 2.84 (7)\textcolor{gray!20}{0} & 35.73 (10) 					 & \textcolor{gray!20}{0}8.40 (6)\textcolor{gray!20}{0} \\
    2114  & 2.82 (5)\textcolor{white}{0} & 34.79 (7)\textcolor{white}{0} & \textcolor{white}{0}7.70 (3)\textcolor{white}{0} \\
    1321  & 2.60 (1)\textcolor{gray!20}{0} & 28.41 (2)\textcolor{gray!20}{0} & 10.16 (11) \\
    3112  & 2.62 (3)\textcolor{white}{0} & 28.49 (3)\textcolor{white}{0} & 10.13 (10) \\
    1341  & 3.97 (11) 					 & 36.78 (11)					 & \textcolor{gray!20}{0}8.85 (8)\textcolor{gray!20}{0} \\
    3114  & 3.96 (9)\textcolor{white}{0} & 33.65 (6)\textcolor{white}{0} & \textcolor{white}{0}7.31 (2)\textcolor{white}{0} \\
    1421  & 2.82 (5)\textcolor{gray!20}{0} & 29.95 (4)\textcolor{gray!20}{0} & 10.63 (12) \\
    4112  & 2.84 (7)\textcolor{white}{0} & 26.92 (1)\textcolor{white}{0} & \textcolor{white}{0}9.68 (9)\textcolor{white}{0} \\
    1431  & 3.96 (9)\textcolor{gray!20}{0} & 37.36 (12)					 & \textcolor{gray!20}{0}8.82 (7)\textcolor{gray!20}{0} \\
    4113  & 3.97 (11) 					 & 33.34 (5)\textcolor{white}{0} & \textcolor{white}{0}7.22 (1)\textcolor{white}{0} \\ \bottomrule
    \end{tabularx}
\begin{tablenotes} \footnotesize
\item
The ranks of the schedules are indicated in bracket.
\end{tablenotes}
\end{threeparttable}
\end{table}

Table~\ref{Table10} shows the likelihood of a stakeless game in a given round as a function of the schedule.
The probability of a weakly stakeless game in Matchday 5 varies approximately between 2.6\% and 4\%. It is the lowest for schedules 2113 and 1321, while schedules 1341 and 4113 are poor choices to avoid these matches. There are only six different values for the 12 schedules because, if the final rank of a team is already fixed after four rounds, then its match played in Matchday~5 will be weakly stakeless, independently of its opponent.

This remark does not refer to the probability of a weakly stakeless game played in the last round, which is, for example, above 35\% for schedule 1241 but below 27\% for schedule 4112 where the order of the last two matchdays is exchanged compared to schedule 1241. The frequency of weakly stakeless games is almost by an order of magnitude higher in Matchday~6 than in Matchday~5.
 
Finally, the probability of a strongly stakeless game is between 7.2\% and 10.6\%. It is the lowest for schedule 4113, which is the worst with respect to the probability of weakly stakeless games in Matchday~5.

To summarise, the schedule has a non-negligible role concerning the competitiveness of the matches. On the basis of Table~\ref{Table10}, the probability of a weakly stakeless game can be reduced by 35\% ($1-2.60/3.97$) in Matchday~5 and by 28\% ($1-26.92/37.36$) in Matchday 6, while the probability of a strongly stakeless game can decrease by 32\% ($1-7.22/10.63$) if the most favourable alternative is chosen instead of the worst.

Among the 12 schedules, the following four are dominated:
\begin{itemize}
\item
Schedule 1341 is worse than its ``inverse'' 4113, derived by exchanging the last two matchdays;
\item
Schedule 1431 is worse than its ``inverse'' 3114, derived by exchanging the last two matchdays;
\item
Schedule 1241 is worse than ``its pair'' 2114, derived by exchanging the home-away pattern in the last two matchdays;
\item
Schedule 1421 is worse than schedule 1321, where the top team plays against a stronger team in Matchday~5.
\end{itemize}
These schedules should not be used if the aim is to avoid stakeless games.

Since there are three different objectives, that is, to minimise the probability of weakly stakeless games played in Matchday 5 and 6, as well as the probability of strongly stakeless games, a weighting scheme can be chosen for these events to get an aggregate cost estimate for each schedule, which is a standard approach in multi-objective optimisation. The cost of a weakly stakeless game played in Matchday 6 can be 1 without losing generality, which calls for determining the relative cost of a weakly stakeless game played in Matchday 5 and a strongly stakeless game. The former is unlikely to be lower than 1. Regarding the latter, there are two contradictory arguments:
\begin{itemize}
\item
If a team has something to play for but its opponent has no such incentives (weakly stakeless game), then the match is exposed to the risk of manipulation or---depending on the outcome---to the impression that the match has been sold. This problem does not emerge if no team can improve its ranking (strongly stakeless game).
\item
Weakly stakeless games can generate attendance because at least one team should exert effort. On the other hand, strongly stakeless games might be completely boring and prone to betting-related manipulation (match fixing) as the targets of both contesting clubs are obtained or lost anyway, regardless of the match outcome they agree upon \citep{VanwerschWillemConstandtHardyns2022}.
\end{itemize}
Hence, the relative cost of weakly and strongly stakeless games greatly rests on the preferences of the decision-maker, and may differ even by an order of magnitude.

\begin{figure}[ht!]
\centering

\begin{subfigure}{\textwidth}
\captionsetup{justification=centering}
\caption{A strongly stakeless game is less costly than a weakly stakeless game}
\label{Fig3a}

\vspace{0.25cm}
\begin{tikzpicture}
\begin{axis}[
xlabel = The cost ratio of a strongly and a weakly stakeless game on Matchday 6,
x label style = {font=\small},
ylabel = The weighted cost of stakeless games,
y label style = {font=\small},
width = 0.99\textwidth,
height = 0.6\textwidth,
ymajorgrids = true,
xmin = 0.1,
xmax = 1,
max space between ticks=50,
] 

\addplot [red, thick, dashdotdotted, mark=pentagon*, mark options=solid] coordinates {
(0.1,0.38790695)
(0.2,0.3959299)
(0.3,0.40395285)
(0.4,0.4119758)
(0.5,0.41999875)
(0.6,0.4280217)
(0.7,0.43604465)
(0.8,0.4440676)
(0.9,0.45209055)
(1,0.4601135)
};

\addplot [gray, thick, mark=|, mark options=solid] coordinates {
(0.1,0.38793755)
(0.2,0.3958811)
(0.3,0.40382465)
(0.4,0.4117682)
(0.5,0.41971175)
(0.6,0.4276553)
(0.7,0.43559885)
(0.8,0.4435424)
(0.9,0.45148595)
(1,0.4594295)
};

\addplot [blue, thick, dashed, mark=asterisk, mark options={solid,semithick}] coordinates {
(0.1,0.3837505)
(0.2,0.391454)
(0.3,0.3991575)
(0.4,0.406861)
(0.5,0.4145645)
(0.6,0.422268)
(0.7,0.4299715)
(0.8,0.437675)
(0.9,0.4453785)
(1,0.453082)
};

\addplot [orange, thick, dashdotted, mark=square*, mark options={solid,thin}] coordinates {
(0.1,0.3202556)
(0.2,0.3304202)
(0.3,0.3405848)
(0.4,0.3507494)
(0.5,0.360914)
(0.6,0.3710786)
(0.7,0.3812432)
(0.8,0.3914078)
(0.9,0.4015724)
(1,0.411737)
};

\addplot [ForestGreen, thick, dashed, mark=triangle*, mark options=solid] coordinates {
(0.1,0.32122575)
(0.2,0.3313555)
(0.3,0.34148525)
(0.4,0.351615)
(0.5,0.36174475)
(0.6,0.3718745)
(0.7,0.38200425)
(0.8,0.392134)
(0.9,0.40226375)
(1,0.4123935)
};

\addplot [brown, thick, mark=diamond*, mark options=solid] coordinates {
(0.1,0.38341235)
(0.2,0.3907257)
(0.3,0.39803905)
(0.4,0.4053524)
(0.5,0.41266575)
(0.6,0.4199791)
(0.7,0.42729245)
(0.8,0.4346058)
(0.9,0.44191915)
(1,0.4492325)
};

\addplot [purple, thick, mark=oplus*, mark options=solid] coordinates {
(0.1,0.30723315)
(0.2,0.3169118)
(0.3,0.32659045)
(0.4,0.3362691)
(0.5,0.34594775)
(0.6,0.3556264)
(0.7,0.36530505)
(0.8,0.3749837)
(0.9,0.38466235)
(1,0.394341)
};

\addplot [black, thick, dotted, mark=star, mark size=3pt, mark options=solid] coordinates {
(0.1,0.38028805)
(0.2,0.3875086)
(0.3,0.39472915)
(0.4,0.4019497)
(0.5,0.40917025)
(0.6,0.4163908)
(0.7,0.42361135)
(0.8,0.4308319)
(0.9,0.43805245)
(1,0.445273)
};
\end{axis}
\end{tikzpicture}
\end{subfigure}

\vspace{0.5cm}
\begin{subfigure}{\textwidth}
\captionsetup{justification=centering}
\caption{A strongly stakeless game is more costly than a weakly stakeless game}
\label{Fig3b}

\vspace{0.25cm}
\begin{tikzpicture}
\begin{axis}[
xlabel = The cost ratio of a strongly and a weakly stakeless game in Matchday 6,
x label style = {font=\small},
ylabel = The weighted cost of stakeless games,
y label style = {font=\small},
width = 0.99\textwidth,
height = 0.6\textwidth,
ymajorgrids = true,
xmin = 1,
xmax = 10,
max space between ticks=50,
legend style = {font=\small,at={(-0.05,-0.15)},anchor=north west,legend columns=4},
legend entries = {Schedule 1231$\qquad$,Schedule 2113$\qquad$,Schedule 2114$\qquad$,Schedule 1321,Schedule 3112$\qquad$,Schedule 3114$\qquad$,Schedule 4112$\qquad$,Schedule 4113}
] 

\addplot [red, thick, dashdotdotted, mark=pentagon*, mark options=solid] coordinates {
(1,0.4601135)
(1.5,0.50022825)
(2,0.540343)
(2.5,0.58045775)
(3,0.6205725)
(3.5,0.66068725)
(4,0.700802)
(4.5,0.74091675)
(5,0.7810315)
(5.5,0.82114625)
(6,0.861261)
(6.5,0.90137575)
(7,0.9414905)
(7.5,0.98160525)
(8,1.02172)
(8.5,1.06183475)
(9,1.1019495)
(9.5,1.14206425)
(10,1.182179)
};

\addplot [gray, thick, mark=|, mark options=solid] coordinates {
(1,0.4594295)
(1.5,0.49914725)
(2,0.538865)
(2.5,0.57858275)
(3,0.6183005)
(3.5,0.65801825)
(4,0.697736)
(4.5,0.73745375)
(5,0.7771715)
(5.5,0.81688925)
(6,0.856607)
(6.5,0.89632475)
(7,0.9360425)
(7.5,0.97576025)
(8,1.015478)
(8.5,1.05519575)
(9,1.0949135)
(9.5,1.13463125)
(10,1.174349)
};

\addplot [blue, thick, dashed, mark=asterisk, mark options={solid,semithick}] coordinates {
(1,0.453082)
(1.5,0.4915995)
(2,0.530117)
(2.5,0.5686345)
(3,0.607152)
(3.5,0.6456695)
(4,0.684187)
(4.5,0.7227045)
(5,0.761222)
(5.5,0.7997395)
(6,0.838257)
(6.5,0.8767745)
(7,0.915292)
(7.5,0.9538095)
(8,0.992327)
(8.5,1.0308445)
(9,1.069362)
(9.5,1.1078795)
(10,1.146397)
};

\addplot [orange, thick, dashdotted, mark=square*, mark options={solid,thin}] coordinates {
(1,0.411737)
(1.5,0.46256)
(2,0.513383)
(2.5,0.564206)
(3,0.615029)
(3.5,0.665852)
(4,0.716675)
(4.5,0.767498)
(5,0.818321)
(5.5,0.869144)
(6,0.919967)
(6.5,0.97079)
(7,1.021613)
(7.5,1.072436)
(8,1.123259)
(8.5,1.174082)
(9,1.224905)
(9.5,1.275728)
(10,1.326551)
};

\addplot [ForestGreen, thick, dashed, mark=triangle*, mark options=solid] coordinates {
(1,0.4123935)
(1.5,0.46304225)
(2,0.513691)
(2.5,0.56433975)
(3,0.6149885)
(3.5,0.66563725)
(4,0.716286)
(4.5,0.76693475)
(5,0.8175835)
(5.5,0.86823225)
(6,0.918881)
(6.5,0.96952975)
(7,1.0201785)
(7.5,1.07082725)
(8,1.121476)
(8.5,1.17212475)
(9,1.2227735)
(9.5,1.27342225)
(10,1.324071)
};

\addplot [brown, thick, mark=diamond*, mark options=solid] coordinates {
(1,0.4492325)
(1.5,0.48579925)
(2,0.522366)
(2.5,0.55893275)
(3,0.5954995)
(3.5,0.63206625)
(4,0.668633)
(4.5,0.70519975)
(5,0.7417665)
(5.5,0.77833325)
(6,0.8149)
(6.5,0.85146675)
(7,0.8880335)
(7.5,0.92460025)
(8,0.961167)
(8.5,0.99773375)
(9,1.0343005)
(9.5,1.07086725)
(10,1.107434)
};

\addplot [purple, thick, mark=oplus*, mark options=solid] coordinates {
(1,0.394341)
(1.5,0.44273425)
(2,0.4911275)
(2.5,0.53952075)
(3,0.587914)
(3.5,0.63630725)
(4,0.6847005)
(4.5,0.73309375)
(5,0.781487)
(5.5,0.82988025)
(6,0.8782735)
(6.5,0.92666675)
(7,0.97506)
(7.5,1.02345325)
(8,1.0718465)
(8.5,1.12023975)
(9,1.168633)
(9.5,1.21702625)
(10,1.2654195)
};

\addplot [black, thick, dotted, mark=star, mark size=3pt, mark options=solid] coordinates {
(1,0.445273)
(1.5,0.48137575)
(2,0.5174785)
(2.5,0.55358125)
(3,0.589684)
(3.5,0.62578675)
(4,0.6618895)
(4.5,0.69799225)
(5,0.734095)
(5.5,0.77019775)
(6,0.8063005)
(6.5,0.84240325)
(7,0.878506)
(7.5,0.91460875)
(8,0.9507115)
(8.5,0.98681425)
(9,1.022917)
(9.5,1.05901975)
(10,1.0951225)
};
\end{axis}
\end{tikzpicture}
\end{subfigure}

\captionsetup{justification=centering}
\caption{The prices of the Champions League group stage schedules if a weakly \\ stakeless game has the same cost in Matchday 5 as in Matchday 6}
\label{Fig3}

\end{figure}


Figure~\ref{Fig3} calculates the price of schedules as a function of the cost ratio between a strongly and weakly stakeless game played in the last round if the weakly stakeless games have a uniform cost. Schedule 4112 is the best alternative if the cost of a strongly stakeless game is at most 3, however, schedule 4113 should be chosen if this ratio is higher.



The key findings can be summed up as follows:
\begin{itemize}
\item
Schedules 1321 and 3112, as well as schedules 1231 and 2113 are almost indistinguishable from the perspective of our objective as they vary only the home-away pattern in the last two matchdays;
\item
Schedule 4112 is better than schedules 1321 and 3112 except if weakly stakeless games in Matchday~5 have an unreasonably high relative cost;
\item
The other five non-dominated schedules (1231, 2113, 2114, 3114, 4113) perform similarly to each other; they imply a worse ratio of weakly stakeless games, but should be used to avoid strongly stakeless games;
\item
Schedules where the team drawn from Pot 1 plays at home in the last round are almost always better than schedules where this team plays its last match away;
\item
Schedules 4112 and 4113 seem to be the best alternatives with respect to competitiveness.
\end{itemize}

It is also important to provide an intuitive explanation for these results.
First, note that home advantage can be used to partially compensate for the intrinsic strength differences between the teams. Therefore, in Matchday~6, the strongest team should play at home and the weakest team away in order to guarantee two and three home matches in the first five rounds for them, respectively. Then the set of potential schedules is limited to 4112, 4113, 3114, and 2114. Among them, schedule 4112 minimises the difference between the strength of schedules both for the two strongest and the two weakest teams. This  is advantageous with respect to weakly stakeless games: according to Table~\ref{Table10}, the probability of a weakly stakeless game in Matchday~6 is the lowest if the two strongest teams play against each other in Mathchday~6.

On the other hand, schedule 4112 is worse than the other three with respect to strongly stakeless games as, if one of the two favourites has unexpected loss(es), or one of the two underdogs has unexpected win(s), the current position of their opponent can easily be secured. The problem with schedule 2114 is an away match for the second-ranked team and a home match for the third-ranked team in Matchday~6, meaning that home advantage is not fully exploited to increase competitiveness. The difference between the remaining schedules 4113 and 3114 is minimal for our metrics. Nonetheless, schedule 4113 is the better option since schedule 3114 contains the less uncertain game of the whole group (the strongest team plays at home against the weakest) in the last round. This game seems to be competitive only by our deterministic model that does not differentiate between the uncertainty of the matches still to be played.

To summarise, home advantage can be used to increase entertainment if the strongest (weakest) team plays at home (away) in the last round. Other considerations limit the set of competitive options to schedules 4112 and 4113; the former (latter) being optimal with respect to weakly (strongly) stakeless games.

\subsection{Limitations} \label{Sec44}

Naturally, our study has certain limitations. The simulation model may be refined and sensitivity analysis can be carried out with various assumptions on the outcomes of the games. The Elo rating of the teams may be a better predictor of the Champions League results, although we have followed the extant literature \citep{CoronaForrestTenaWiper2019, DagaevRudyak2019} by using UEFA club coefficients for this purpose.
In addition, other aspects of scheduling fairness such as balancing the kick-off times of the matches \citep{Krumer2020a} or the home games played on non-frequent days between the teams \citep{GollerKrumer2020} have been neglected. Our simulation technique is independent of the order of the matches but scheduling may affect performance: \citet{Krumer2021} finds that a shorter gap between the two matches favours the underdog team, especially if it did not lose in the first match. Last but not least, a team may suffer from certainly playing at home in the last round, which implies playing fewer home games in the first five rounds.

The cost of a weakly stakeless match may depend on whether the team whose position in the group ranking is already known plays at home or away. For example, in the basketball EuroLeague, the games played at the stadium of a team that ensured home advantage in the playoffs attract significantly fewer fans \citep{DiMattiaKrumer2023}. 

The suggested game classification scheme does not deal with the sequence of matches played in the first four rounds, but a change in the scheduling of Matchdays 1--4 might affect in-tournament dynamics, including psychological and behavioural reasons. Having fewer stakeless games in the last two matchdays can imply that the previous matches are less relevant and interesting for the fans: according to \citet{DiMattiaKrumer2023}, games played in the first part of a double round-robin tournament are likely perceived as less decisive than games played in the second part.

Stakeless games are identified in a deterministic framework with three distinct categories. However, a team may exert lower effort still if its position is known with a high probability. Since the difference between the value of the first two places is probably smaller in the UEFA Champions League groups than the difference between the value of the second and the third positions, and winning a match is awarded by the revenue distribution system \citep{UEFA2021b}, the true incentive scheme is not binary as in our model.

The number of stakeless games can also be reduced through more radical changes in the tournament design. For example, teams that have performed best during the preliminary group stage can choose their opponents during the subsequent knockout stage in order to provide a strong incentive for exerting full effort even if the position of the team in the final ranking is already known \citep{Guyon2022a}. Analogously, additional draw constraints can contribute to avoiding unfair situations \citep{Csato2022a}, which might include stakeless games.

Finally, the group fixtures are not necessarily independent of each other and there can be other restrictions. For instance, two Italian teams, FC Internazionale Milano and AC Milan that have played in the 2021/22 UEFA Champions League share the same stadium, thus, both of them cannot play at home on the same matchday. Clubs from certain countries are better to play at home in Matchday 5 instead of Matchday 6 due to weather conditions. Similar constraints might prevent choosing the optimal schedule for all groups.

\section{Discussion} \label{Sec5}

The paper has proposed a novel classification method for games played in a round-robin tournament. Our selection criterion is connected to the incentives of the teams, it depends on whether the position of a team in the final ranking is already known, independently of the outcomes of matches still to be played. In particular, a game is called (1) competitive if neither opposing team is indifferent; (2) weakly stakeless if exactly one of the opposing teams is indifferent; or (3) strongly stakeless if both teams are indifferent. Avoiding stakeless matches should be an imperative aim of the organiser because a team might play with little enthusiasm if the outcome of the match cannot affect its final position, which probably reduces attendance and is unfair to the teams that have already played against this particular team as illustrated by Example~\ref{Examp1}.

The group stage of the UEFA Champions League, the most prestigious European club football competition, is currently organised as a double round-robin tournament with four teams. Therefore, a simulation model has been built to compare the 12 possible sequences for the group matches with respect to the probability of games where one or both clubs cannot achieve a higher rank. Some schedules are shown to be dominated by other schedules from this perspective. It is found that the strongest team should play at home in the last round against one of the middle teams, depending on the preferences of the tournament organiser.

A better prediction model may be found in the future which is determined by other variables, making the best schedule dependent on the composition of the groups. However, choosing a uniform schedule for all groups has the advantage of being \emph{transparent}, and can prevent long debates on whether the proposed schedule has been manipulated in favour of a particular team.

Even though the UEFA Champions League will see a fundamental reform from the 2024/25 season, double round-robin contests with four teams are common in other sports and tournaments. For example, they are used in the UEFA Nations League, in the African section of the FIFA World Cup qualification, and in the qualification for the European Men's Handball Championship, among others. Then, our game classification scheme proposed in Section~\ref{Sec3} can be directly applied, while the simulation methodology described in Section~\ref{Sec4} can be updated with the relevant historical data and statistical model in order to determine the best schedule concerning the competitiveness of the games.

We have argued that a tournament schedule is \emph{fairer} if the probability of games where at least one team has few incentives to win is lower. These situations threaten with one team or both teams playing intentionally below their full potential, which might lead to the opponent scoring/not conceding a goal. Since the number of matches is fixed, reducing the probability of stakeless games maximises the expected number of competitive games that are more exciting to watch. The assessment and simulation method suggested here allows for the organiser to choose an optimal timetable without altering other characteristics of the tournament (number of teams, number of matches, qualification rules, points system, etc.). Consequently, the final ranking will better reflect the true strengths of the teams as remaining less affected by the unwanted incentives attributable to the tournament schedule. Having fewer stakeless games is also beneficial for the teams that are less likely to suffer from unfair results of matches played by other teams and for fans who can see more matches where both teams give their best.

Stakeless games may have powerful effects on stadium attendance and television audience. According to \citet{DiMattiaKrumer2023}, basketball clubs playing at home have smaller attendance demand after they have ensured home advantage in the playoffs. Unsurprisingly, a team still in contention to win the championship positively affects attendance demand \citep{PawlowskiNalbantis2015}. \citet{BuraimoForrestMcHaleTena2022} find that a match with the highest championship significance in the English Premier League is expected to attract a  96\% higher audience compared to a match without implications for end-of-season prizes but with the same characteristics (clubs, players, etc.). Consequently, exploring the relationship between attendance demand and stakeless games in the UEFA Champions League seems to be an interesting direction for future research.

To conclude, picking up an optimal sequence of games with respect to the proposed metric increases the utility of all stakeholders at a minimal price if the scheduling constraints are appropriately defined. Therefore, our study has hopefully managed to uncover an important aspect of tournament design and can inspire further research by scheduling experts to optimise various measures of competitiveness beyond the classical criteria of fairness.

\section*{Acknowledgements}
\addcontentsline{toc}{section}{Acknowledgements}
\noindent
This paper could not have been written without the \emph{father} of the first author (also called \emph{L\'aszl\'o Csat\'o}), who has helped to code the simulations in Python. \\
We are grateful to \emph{Dries Goossens}, \emph{Alex Krumer}, \emph{Frits C.~R.~Spieksma}, and \emph{Stephan Westphal} for useful advice. \\
Eight anonymous reviewers provided valuable comments and suggestions on earlier drafts. \\
We are indebted to the \href{https://en.wikipedia.org/wiki/Wikipedia_community}{Wikipedia community} for summarising important details of the sports competition discussed in the paper. \\
This research was funded by the Ministry of Culture and Innovation and the National Research, Development and Innovation Office under Grant Nr.~TKP2021-NVA-02. The work of \emph{Roland Molontay} is supported by the European Union project RRF-2.3.1-21-2022-00004 within the framework of the Artificial Intelligence National Laboratory.

\bibliographystyle{apalike}
\bibliography{All_references}

\end{document}